\documentclass[12pt]{amsart}

\usepackage{lineno,hyperref}
\usepackage{amsmath,amsfonts,amsthm,mathrsfs,mathtools,amscd}
\usepackage{amssymb,latexsym}
\usepackage{cases}
\usepackage[numbers]{natbib}
\usepackage{booktabs}
\usepackage{bm} 
\usepackage{cleveref}
\usepackage{paralist}
\usepackage{multirow}
\usepackage{enumerate}
\usepackage{adjustbox}
\usepackage[shortlabels]{enumitem}
\setlist[enumerate]{nosep}
\usepackage{array}
\usepackage{tikz-cd}
\usepackage{witharrows}

\usepackage[figuresright]{rotating}

\usepackage[section]{placeins}

\renewcommand\appendix{\setcounter{secnumdepth}{-2}}

\usepackage{url,cancel}
\usepackage{float}

\usepackage{caption}

\newtheorem{thm}{Theorem}[section]

\newtheorem{lem}[thm]{Lemma}
\newtheorem{prop}[thm]{Proposition}
\newtheorem{cor}[thm]{Corollary}
\theoremstyle{remark}
\newtheorem{re}[thm]{Remark}
\theoremstyle{definition}

\newtheorem{defn}[thm]{Definition}

\newcommand{\tr}{\mbox{Tr}}
 \newcommand{\nr}{\mbox{N}}
\newcommand{\ho}{\mbox{H}}

\newcommand{\vol}{\mathrm{vol}}
\newcommand{\diag}{\mathrm{diag}}

\makeatletter

\@addtoreset{equation}{section}
\makeatother
\numberwithin{equation}{section}

\begin{document}
\author{Zilong He}
\address{Department of Mathematics,	Dongguan University of Technology, Dongguan 523808, China}
\email{zilonghe@connect.hku.hk}
	
\title[]{Finiteness theorems for trace Euclidean quadratic lattices over totally real number fields}
\subjclass[2020]{Primary 11E12; Secondary 11H31, 11R80}
\date{}
\keywords{trace Euclidean lattice, quadratic form, finiteness, totally real number field}
\begin{abstract}
	Motivated by Clark's work on Euclidean quadratic forms, we introduce
	trace Euclidean and house Euclidean quadratic lattices over totally
	real number fields, and relate these concepts through the $p$-norm
	Euclidean property. We prove that, as $F$ varies over all totally
	real number fields of arbitrary degree, there are only finitely many 
	positive definite trace Euclidean $\mathcal{O}_{F}$-lattices
	of arbitrary rank $n\ge1$, up to isometry. For every $p\in[1,\infty]$,
	the same finiteness holds for positive definite $p$-norm
	Euclidean lattices. We also classify all positive definite 
	trace Euclidean lattices of rank one over real quadratic fields:
	there are exactly six isometry classes, and every such lattice is free. As a corollary, the trace Euclidean real quadratic fields are precisely
	$\mathbb{Q}(\sqrt{2})$, $\mathbb{Q}(\sqrt{5})$,
	and $\mathbb{Q}(\sqrt{13})$.
\end{abstract}
\maketitle

\section{Introduction}
\label{sec:introduction}

The classification of positive definite quadratic forms over $\mathbb{Z}$ satisfying prescribed representation properties has been studied extensively. Among these properties, universality is the most prominent. A positive definite quadratic form $Q$ over $\mathbb{Z}$ is called \textit{universal} if it represents all positive integers. Its study originated from Lagrange's four-square theorem: the quadratic form $x_{1}^{2}+x_{2}^{2}+x_{3}^{2}+x_{4}^{2}$ is universal. The systematic study of universal quadratic forms was initiated by Ramanujan and subsequently developed by Dickson and Willerding \cite{dickson_quaternary-universal_1927,dickson_universal_1929,ramanujan_universal-quaternary_1917,willerding_universal-quaternary_1948}. Finally, universality was characterized by the Conway--Schneeberger $15$-Theorem for classical integral quadratic forms \cite{conway_fifteen_2000}, and by the Bhargava--Hanke $290$-Theorem for integral quadratic forms \cite{bhargava-hanke_290_2011}; see Kim's survey \cite{kim_recent-universal_2004} for more historical background. Some properties weaker than universality have also been studied, such as almost universality for quaternary quadratic forms by Bochnak and Oh \cite{bochnak-oh_almost-universal_2009}, spinor regularity for ternary quadratic forms by Chan and Earnest \cite{chan-earnest_spinor-regular_2004}, and almost regularity for ternary quadratic forms by Chan and Oh \cite{chan-oh_almost-regular_2004}. When $\mathbb{Z}$ is replaced by rings of algebraic integers, the classification problem becomes substantially more difficult, even for universal quadratic forms, and consequently only a few complete classifications are known; see the work of Chan, Kim, and Raghavan \cite{chan-kim-raghavan_universal_1996} and Kala's survey \cite{kala_universal-indecomposables_2023}. Therefore, establishing finiteness or existence results is often the first step toward a complete classification of special quadratic forms. For results on regularity, see Chan et al. \cite{chan-earnest-icaza-kim_2007} and Chan and Icaza \cite{chan-icaza}; for results on universality, see Kala and collaborators \cite{kala-kramer-krasensky_kitaoka-squares_2026,kala-yatsyna_kitaoka-lifting_2023,kala-yatsyna_even-better-squares_2026,kala-yatsyna-zmija_density-zero}, Kim et al. \cite{kim-kim-park_rank-seven_2022}, and Kr\'asensk\'y et al. \cite{krasensky-tinkova-zemkova_biquadratic_2020}, as well as the references therein. 

In \cite{clark_ADC-I-2012}, Clark introduced Euclidean quadratic forms over $\mathbb{Z}$: a positive definite $n$-ary quadratic form $Q$ is called \textit{Euclidean} if for every $x\in \mathbb{Q}^{n}$, there exists $y\in \mathbb{Z}^{n}$ such that $Q(x-y)<1$. To extend this notion to the ring of integers $\mathcal{O}_{F}$ of a totally real number field $F$, it is natural to equip $\mathcal{O}_{F}$ with the absolute norm $|\nr_{F/\mathbb{Q}}(\cdot)|$. Let $V$ be a non-degenerate quadratic space over $F$ with a quadratic form $Q$, and let $L$ be a positive definite $\mathcal{O}_{F}$-lattice on $V$. We say that $L$ is \textit{norm Euclidean} (or simply \textit{Euclidean}) if for every $x\in V$, there exists $y\in L$ such that $\nr_{F/\mathbb{Q}}(Q(x-y))<1$. If $L$ is free and norm Euclidean, we also call the associated quadratic form $Q$ Euclidean. In \cite[Conjecture 29]{clark_ADC-I-2012}, Clark further conjectured that, up to isometry, there are only finitely many totally definite Euclidean quadratic forms $Q$ over $F$, as $F$ varies over all totally real number fields. Over $\mathbb{Q}$, Nebe's classification, as corrected and independently reclassified by Clark and Jagy, gives exactly $70$ positive definite Euclidean $\mathbb{Z}$-lattices up to isometry; see \cite[Theorem~1.6, Remark 1.7, and \S 6.4]{clark_ADC-II-2014}. However, the conjecture remains open in general, and it is even unknown whether there are only finitely many isometry classes of positive definite Euclidean $\mathcal{O}_{F}$-lattices over a fixed totally real number field $F \neq \mathbb{Q}$. We will further discuss this conjecture in a separate paper \cite{he_norm-eculidean_2026}. 

Motivated by Clark's work, we define analogous notions by replacing the norm $\nr_{F/\mathbb{Q}}$ with either the trace $\tr_{F/\mathbb{Q}}$ or the house $\ho_{F/\mathbb{Q}}$, and introduce the $p$-norm Euclidean property (cf. Definitions \ref{defn:trace-Euclidean} and \ref{defn:p-norm}) to relate these three properties. We prove finiteness theorems for positive definite trace Euclidean lattices of arbitrary rank $n\ge1$, uniformly as the totally real field $F$ and its degree $d$ vary, in both the classic integral case (Theorem \ref{thm:finiteness-classic}) and the integral case (Theorem \ref{thm:finiteness-integral}). Using Voight's minimal discriminants \cite{voight_totally-real_2008} and unconditional Odlyzko bounds, we also obtain explicit upper bounds on the rank and degree, as shown in Tables \ref{tab:admissible-classical}--\ref{tab:max-rank-degree-norm}. These results can be viewed as trace analogues of Clark's conjecture. Using \eqref{implication-Euclidean-maximal}, we deduce corresponding finiteness results for $p$-norm Euclidean lattices (Corollary \ref{cor:p-norm}). 

The trace Euclidean property is also related to the theory of ideal lattices and thin number fields. Bayer-Fluckiger proved finiteness of thin fields and of number fields containing a thin ideal class (cf. \cite[Propositions 11.4 and 12.4]{bayer-fluckiger_euclidean_2006}), and Bayer-Fluckiger and Nebe also listed all the candidates for totally real thin fields; see \cite[Theorem 5.1 and Table 1]{bayer-fluckiger-nebe_euclidean_2005}. In \cite[\S8]{dutour-schurmann-vallentin_voronoi_2008}, Dutour Sikiri\'c, Sch\"urmann, and Vallentin completed the classification
by excluding the three remaining candidates. In fact, in the rank one case, integrality and the trace Euclidean condition imply that the underlying fractional ideal is thin, as explained in Section \ref{sec:trace-euclidean-fields}. Applying the covering formula of Clark and Jagy \cite[Theorem 5.1(b)]{clark_ADC-II-2014}, we classify all positive definite trace Euclidean lattices of rank one over real quadratic fields (Theorem \ref{thm:rank-one-classification}). In particular, we determine all trace Euclidean real quadratic fields (Corollary \ref{cor:trace-euclidean-quadratic}).

Before proceeding to our results, we review the basic definitions and notation. Let $F$ be a totally real number field, and let $\mathcal{O}_{F}$ be the ring of integers of $F$. Let $d=[F:\mathbb{Q}]$ and $\sigma_{1},\ldots,\sigma_{d}$ be $d$ distinct real embeddings from $F$ into $ \mathbb{R}$. For $\alpha \in F$, we define the trace, norm and house of $\alpha$ by
\begin{align*}
	\tr_{F/\mathbb{Q}}(\alpha) = \sum_{i=1}^{d} \sigma_i(\alpha), \quad
	\nr_{F/\mathbb{Q}}(\alpha) = \prod_{i=1}^{d} \sigma_i(\alpha), \quad 
	\ho_{F/\mathbb{Q}}(\alpha) = \max_{1\le i\le d} |\sigma_i(\alpha)|,
\end{align*}
respectively. We denote by $\Delta_{F}$ the discriminant of $F$. As usual, we write $\nr(\mathfrak{a})=|\mathcal{O}_F/\mathfrak{a}|$ for the absolute norm of a nonzero integral ideal $\mathfrak{a}$. For a nonzero fractional ideal $\mathfrak{b}$ of $F$, we choose
$c\in F^{\times}$ such that $c\mathfrak{b}$ is integral, and put
\begin{align*}
	\nr(\mathfrak{b})
	\coloneq
	\frac{\nr(c\mathfrak{b})}
	{|\nr_{F/\mathbb{Q}}(c)|}.
\end{align*}
This definition is independent of the choice of $c$ 
(cf. \cite[Definition 9.4.1]
{alaca_williams_2004}). In particular, $
	\nr((\alpha))
	=
	|\nr_{F/\mathbb{Q}}(\alpha)|$
for the principal ideal $(\alpha)$ generated by the element $\alpha\in F^{\times}$; see
\cite[Theorems 9.2.5 and 9.4.1]
{alaca_williams_2004}.

Let $V$ be an $n$-dimensional quadratic space over $F$ with associated symmetric bilinear form $B$ and quadratic form $Q$ such that
\begin{align*}
	B(x,y) = \dfrac{1}{2}(Q(x+y)-Q(x)-Q(y)),\quad x,y\in V.
\end{align*} 
Let $L$ be an $\mathcal{O}_{F}$-lattice of rank $n$ on $V$. As usual, we denote the scale ideal, norm ideal, and volume ideal of $L$ by $\mathfrak{s}(L)$, $\mathfrak{n}(L)$, and $\mathfrak{v}(L)$, respectively. We call $L$ \emph{classic integral} if $\mathfrak{s}(L)\subseteq \mathcal{O}_F$ and \emph{integral} if $\mathfrak{n}(L)\subseteq \mathcal{O}_F$. Recall that $\mathfrak{n}(L)\subseteq \mathfrak{s}(L)$, so if $L$ is classic integral, then it is integral. We call $L$ \emph{$\mathcal{O}_{F}$-maximal} if there is no integral $\mathcal{O}_{F}$-lattice $L^{\prime}$ on $V$ such that $L\subsetneq L^{\prime}$. Throughout this paper, all lattices are assumed to be positive definite and integral.

\begin{defn}\label{defn:trace-Euclidean}
	Let $F$ be a totally real number field of degree $d$ and $L$ be a positive definite $\mathcal{O}_{F}$-lattice.
	\begin{enumerate}[label=\rm(\roman*), itemindent=0em, leftmargin=*]
		\item We say that $L$ is trace Euclidean over $F$ if for every $x \in V$, there exists $y \in L$ such that $\tr_{F/\mathbb{Q}}(Q(x-y)) < d$.
 
		\item We say that $L$ is house Euclidean over $F$ if for every $x \in V$, there exists $y \in L$ such that $\ho_{F/\mathbb{Q}}(Q(x-y)) < 1$.
	\end{enumerate}
\end{defn}
If $d=1$, i.e., $F=\mathbb{Q}$, then the trace Euclidean condition coincides with Clark's definition of a Euclidean lattice over $\mathbb{Z}$ in \cite[\S 2.1]{clark_ADC-I-2012}.
\begin{thm}\label{thm:finiteness-classic}
	As $F$ varies over all totally real number fields of arbitrary degree $d=[F:\mathbb{Q}]$, there are only finitely many positive definite trace Euclidean classic integral $\mathcal{O}_{F}$-lattices of arbitrary rank $n\ge1$, up to isometry. Every such lattice satisfies
	\begin{align*}
		n\Delta_{F}^{1/d}<2\pi e.
	\end{align*}
\end{thm}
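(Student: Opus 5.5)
The plan is to pass to the trace form and compare a covering bound with a packing bound. Given a positive definite $\mathcal{O}_F$-lattice $L$ of rank $n$ on $V$, consider the $\mathbb{Z}$-lattice $L$ of rank $nd$ with the positive definite quadratic form $q(x)=\tr_{F/\mathbb{Q}}(Q(x))$ on $V\otimes_{\mathbb{Q}}\mathbb{R}\cong\mathbb{R}^{nd}$. The trace Euclidean hypothesis says that for every $x\in V$ there is $y\in L$ with $q(x-y)<d$. Since $V$ is dense in $V\otimes\mathbb{R}$ and the covering radius is attained, this gives covering radius $\mu(L,q)\le\sqrt{d}$ (after a density and compactness argument, possibly with non-strict inequality). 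Hence balls of radius $\sqrt{d}$ centred at $L$ cover $\mathbb{R}^{nd}$, so
\begin{align*}
\det(L,q)^{1/2}\le \vol(B_{nd})\, d^{nd/2}.
\end{align*}

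Next I bound the covolume from below using classic integrality. Over $\mathbb{Z}$, the Gram determinant of the trace form equals $\Delta_F^{\,n}\,\nr(\mathfrak{v}(L))$, up to a power of $2$ depending on conventions. Because $L$ is classic integral, $\mathfrak{v}(L)\subseteq\mathcal{O}_F$, so $\nr(\mathfrak{v}(L))\ge1$. Therefore $\det(L,q)\ge\Delta_F^{\,n}$, and combining with the covering bound gives
\begin{align*}
\Delta_F^{n/2}\le \frac{\pi^{nd/2}d^{nd/2}}{\Gamma(nd/2+1)}.
\end{align*}
Stirling's bound $\Gamma(m+1)>(m/e)^m\sqrt{2\pi m}$ with $m=nd/2$ then turns this into $\Delta_F^{1/d}<\pi d\cdot 2e/(nd)=2\pi e/n$, i.e. $n\Delta_F^{1/d}<2\pi e$. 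The strict inequality comes from the $\sqrt{2\pi m}$ factor. Some care is needed to get strictness if only $\mu\le\sqrt d$ is available, but the extra factor absorbs it.

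Finiteness follows from this inequality. Minkowski's bound gives $\Delta_F^{1/d}\ge c>1$, growing linearly in $d$ asymptotically by Odlyzko-type bounds, and $\Delta_F^{1/d}\ge 1$ always. So $n<2\pi e$ is bounded, and $\Delta_F^{1/d}<2\pi e$ bounds $d$ via the Minkowski bound $\Delta_F^{1/d}\gtrsim e^2/4\cdot$(growing). Hence $d$ is bounded, then $\Delta_F$ is bounded, so by Hermite there are finitely many fields $F$. For each $F$ and $n$, the same inequality also bounds $\nr(\mathfrak{v}(L))\le$ a constant. There are finitely many integral ideals of bounded norm, and, by the classical finiteness of classes of positive definite lattices of bounded volume (reduction theory applied to the trace form, whose determinant is bounded), there are finitely many $\mathcal{O}_F$-lattices up to isometry.

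The main obstacle is the determinant identity relating $\det(L,q)$ to $\Delta_F^{\,n}\nr(\mathfrak{v}(L))$ for non-free $L$. I would handle it locally, or by writing $L=\mathcal{O}_Fv_1\oplus\cdots\oplus\mathfrak{a}v_n$ and computing the trace form on $\mathfrak{a}$ as $\Delta_F\nr(\mathfrak{a})^2$, keeping track of the factor $2$ between $B$ and $Q$.
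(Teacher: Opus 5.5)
Your derivation of $n\Delta_F^{1/d}<2\pi e$ follows the paper's argument and is correct: you compare the covering radius of the trace lattice with its covolume, use $\det G_\Lambda=\Delta_F^{n}\nr(\mathfrak{v}(L))$, get $\nr(\mathfrak{v}(L))\ge1$ from classic integrality, and finish with a lower bound for $\Gamma$. Working only with $\rho_T(L)\le d$ is legitimate, because the strict inequality $\Gamma(m+1)>(m/e)^m$ already gives strictness. This lets you bypass Lemma~\ref{lem:phi-Euclidean-trace}, which rests on Clark's result that the maximum is attained at a rational point. You should drop the hedge ``up to a power of $2$'', though. With $B(x,x)=Q(x)$ and $B_T(x,x)=q_T(x)$, the identity of Lemma~\ref{lem:gram-det} is exact. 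An unaccounted factor $2^{-nd}$ would weaken your constant to $4\pi e$, which is exactly the paper's bound in the non-classic integral case.

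The genuine gap is in bounding $d$. For totally real $F$, Minkowski gives $\Delta_F^{1/2}\ge d^d/d!$, i.e.\ $\delta_F\ge(d^d/d!)^{2/d}$. Since $d!>(d/e)^d$, this is below $e^2\approx7.39$ for every $d$, whereas you need to exceed $2\pi e\approx17.08$; so Minkowski excludes no degree at all. Your claim that root discriminants grow (linearly) in $d$ is also false. The root discriminant is unchanged in unramified extensions, so a totally real field with an infinite class field tower yields fields of unbounded degree and bounded root discriminant. What you need is an Odlyzko-type lower bound whose limiting constant exceeds $2\pi e$; unconditionally, $\liminf\delta_F\ge4\pi e^{1+\gamma}\approx60.8$ for totally real fields. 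The paper uses the explicit form $\delta_F>36.347e^{-10.667/d}$, which gives $d\le14$, and only then does Hermite's theorem finish the count of fields.

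There is a second, smaller gap. Reduction theory for the trace form only gives finitely many $\mathbb{Z}$-isometry classes of the $\mathbb{Z}$-lattices $(L,\tr_{F/\mathbb{Q}}\circ Q)$. Non-isometric $\mathcal{O}_F$-lattices (e.g.\ Galois conjugates) can have isometric trace forms. You can close this in either of two ways:
\begin{enumerate}[label=\rm(\roman*), leftmargin=*]
\item Show that a fixed $\mathbb{Z}$-lattice carries only finitely many compatible $\mathcal{O}_F$-structures. Multiplication by each element of an integral basis is a self-adjoint endomorphism of operator norm at most its house, so it has a bounded integer matrix in a reduced basis.
\item Invoke $\mathcal{O}_F$-class finiteness for integral scale and bounded volume directly, as the paper does via O'Meara 103:4 and 103:5.
\end{enumerate}
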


\begin{thm}\label{thm:finiteness-integral}
	As $F$ varies over all totally real number fields of arbitrary degree $d=[F:\mathbb{Q}]$, there are only finitely many positive definite trace Euclidean integral $\mathcal{O}_{F}$-lattices of arbitrary rank $n\ge1$, up to isometry. Every such lattice satisfies
	\begin{align*}
		n\Delta_{F}^{1/d}<4\pi e.
	\end{align*}
\end{thm}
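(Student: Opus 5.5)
The plan is to view $L$ as a $\mathbb{Z}$-lattice of rank $nd$ via the trace form, and then use a covering-versus-packing comparison. Put $q(x)=\tr_{F/\mathbb{Q}}(Q(x))$ on $V\otimes_{\mathbb{Q}}\mathbb{R}\cong\mathbb{R}^{nd}$; this is positive definite because $L$ is totally positive definite. The trace Euclidean condition says that every $x\in V$ lies within $q$-distance $<\sqrt d$ of $L$. Since $V$ is dense in $V\otimes\mathbb{R}$, the covering radius of $(L,q)$ is at most $\sqrt d$ (we get $\le$ in the limit; strictness is not needed). So the translates of the $q$-ball of radius $\sqrt d$ by $L$ cover $\mathbb{R}^{nd}$, which gives
\begin{align*}
\vol_q(L)\le \omega_{nd}\, d^{nd/2},
\end{align*}
where $\omega_m=\pi^{m/2}/\Gamma(m/2+1)$ is the volume of the unit ball in $\mathbb{R}^m$.

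The next step is a lower bound for $\vol_q(L)^2=\det(L,\tr B)$. Write $L$ in a pseudo-basis, $L=\mathfrak{a}_1x_1\oplus\cdots\oplus\mathfrak{a}_nx_n$ inside a basis of $V$. The standard formula gives
\begin{align*}
\det(L,\tr\circ B)=\Delta_F^{\,n}\,\nr(\mathfrak{v}(L)),
\end{align*}
where $\mathfrak{v}(L)=(\mathfrak{a}_1\cdots\mathfrak{a}_n)^2\det(B(x_i,x_j))$. In the classic integral case $\mathfrak{v}(L)\subseteq\mathcal{O}_F$, so $\nr(\mathfrak{v}(L))\ge1$. In the integral case, $2B(x,y)\in\mathcal{O}_F$, so $2^n\mathfrak{v}(L)$ is integral and $\nr(\mathfrak{v}(L))\ge 2^{-nd}$. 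Combining with the covering bound:
\begin{align*}
\Delta_F^{n/2}\le \omega_{nd}\,d^{nd/2}\quad\text{(classic)},\qquad \Delta_F^{n/2}2^{-nd/2}\le \omega_{nd}\,d^{nd/2}\quad\text{(integral)}.
\end{align*}

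To extract the stated inequality, set $m=nd$ and use $\Gamma(m/2+1)\ge (m/(2e))^{m/2}\sqrt{\pi m}$ (Stirling), which gives $\omega_m< (2\pi e/m)^{m/2}$. Then $\Delta_F^{n/2}<(2\pi e\,d/(nd))^{nd/2}$, so $\Delta_F^{1/d}<2\pi e/n$, that is, $n\Delta_F^{1/d}<2\pi e$. The extra factor $2^{nd/2}$ in the integral case turns this into $n\Delta_F^{1/d}<4\pi e$. Strictness comes from the strict Stirling inequality, so the borderline covering case causes no trouble.

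For finiteness, Minkowski/Odlyzko give $\Delta_F^{1/d}\to\infty$ as $d\to\infty$, and $\Delta_F^{1/d}\ge1$ with equality only for $\mathbb{Q}$ (otherwise $\Delta_F\ge 5$ for $d\ge2$, so the root discriminant is bounded below by an absolute constant greater than 1). Hence $d$ is bounded, then $\Delta_F$ is bounded, so by Hermite there are finitely many $F$, and $n<4\pi e$ is bounded. For each such $F$ and $n$, the above gives $\nr(\mathfrak{v}(L))$ bounded above. It remains to show finitely many isometry classes of positive definite integral $\mathcal{O}_F$-lattices of given rank and bounded volume. I would argue this via the trace form: $(L,\tr B)$ is a positive definite $\mathbb{Z}$-lattice of rank $nd$ with bounded determinant and is integral up to the factor $2$ and the different. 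There are finitely many such $\mathbb{Z}$-lattices up to $\mathbb{Z}$-isometry, and each carries finitely many $\mathcal{O}_F$-structures compatible with the form, since the automorphism group is finite and the $\mathcal{O}_F$-action is determined by a finite choice of embedding. Alternatively one can cite the classical finiteness of the class number of genera with bounded discriminant.

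The main obstacle I expect is making this last step rigorous: recovering an $\mathcal{O}_F$-isometry class, including the $F$-bilinear form $B$ itself and not just its trace, from finitely many $\mathbb{Z}$-data. I would handle it by noting that $B$ is determined by $\tr(\alpha B)$ for $\alpha$ in a $\mathbb{Z}$-basis of $\mathcal{O}_F$. Each $\tr(\alpha B(x,y))$ is bounded on a reduced basis, so these forms are integral (or half-integral, up to the different) with bounded entries, which gives finitely many possibilities.
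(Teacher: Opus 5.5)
\textbf{Overall.} Your analytic core is the paper's argument. You compare a covering by $\|\cdot\|_T$-balls of radius $\sqrt d$ with the covolume $\Delta_F^{n/2}\nr(\mathfrak{v}(L))^{1/2}$, bound $\nr(\mathfrak{v}(L))\ge 2^{-nd}$ by scaling by $2$, and use a strict lower bound for $\Gamma(nd/2+1)$ to get $n\Delta_F^{1/d}<4\pi e$. Settling for $\rho_T(L)\le d$ via density is fine. The paper gets $\rho_T(L)<d$ from Clark's rational-point result, but the strict Gamma bound makes that unnecessary here.

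\textbf{The error in the degree bound.} It is false that $\Delta_F^{1/d}\to\infty$.
\begin{itemize}
\item Minkowski only gives $\Delta_F^{1/d}\ge (d^d/d!)^{2/d}\to e^2\approx 7.4$, which is below your threshold.
\item The root discriminant is constant along an unramified tower, and infinite class field towers of totally real fields exist.
\end{itemize}
What you need is an Odlyzko bound that beats your threshold. Unconditionally, $\liminf \Delta_F^{1/d}\ge 4\pi e^{1+\gamma}\approx 60.8$ for totally real fields. Explicitly, the paper uses $\Delta_F^{1/d}>36.347e^{-10.667/d}$. For $n=1$ your inequality only gives $\Delta_F^{1/d}<4\pi e\approx 34.16$, so this numerical comparison is essential and must be stated. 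With the explicit constants it forces $d\le 171$. The paper avoids the weaker threshold: a rank-one integral lattice is classic integral, so $\nr(\mathfrak{v}(L))\ge1$ when $n=1$. Hence $\Delta_F^{1/d}<2\pi e$ in every case, which gives $d\le 14$. Separately, your remark that $\Delta_F\ge5$ bounds the root discriminant away from $1$ is a non sequitur, since $5^{1/d}\to 1$. Only $\Delta_F^{1/d}\ge 1$ is needed for $n<4\pi e$.

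\textbf{Finiteness over a fixed $F$: a different route.} The paper applies O'Meara's Remark~103:5 and Theorem~103:4 to the rescaled lattice $L^{(2)}$. Your restriction-of-scalars argument also works, but not for the first reason you give. The finite automorphism group is irrelevant: multiplication by $\alpha\in\mathcal{O}_F$ is a self-adjoint endomorphism of $(L,\tr_{F/\mathbb{Q}}\circ B)$, not an isometry. The correct mechanism is the one in your last paragraph, made precise as follows.
\begin{itemize}
\item Since $\tr_{F/\mathbb{Q}}(Q(L))\subseteq\mathbb{Z}$, each vector of a Minkowski-reduced basis $e_1,\dots,e_{nd}$ has $\|e_k\|_T^2\ge1$. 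So each $\|e_k\|_T^2$ is at most a constant times $\Delta_F^n\nr(\mathfrak{v}(L))$.
\item Multiplication by $\alpha$ has operator norm at most $\ho_{F/\mathbb{Q}}(\alpha)$ for $\|\cdot\|_T$. Hence, for $\alpha_j$ in a $\mathbb{Z}$-basis of $\mathcal{O}_F$, the entries $\tr_{F/\mathbb{Q}}(\alpha_jB(e_k,e_l))$ are bounded.
\item These entries lie in $\tfrac12\mathbb{Z}$, since $B(L,L)\subseteq\tfrac12\mathcal{O}_F$; the different plays no role.
\item The resulting matrices $G_j$ recover the $\mathcal{O}_F$-action: multiplication by $\alpha_j$ has matrix $G^{-1}G_j$, with $G$ the trace Gram matrix. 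Nondegeneracy of the trace pairing then recovers $B$, so equal data gives an $\mathcal{O}_F$-isometry.
\end{itemize}
This yields a self-contained proof using only reduction theory over $\mathbb{Z}$, and the separate finiteness of $\mathbb{Z}$-classes becomes superfluous. The paper's route is shorter but relies on finiteness of class numbers and of supporting spaces over $F$.
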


\begin{re}\label{re:finiteness}
	(i) In Theorems \ref{thm:finiteness-classic}
	and \ref{thm:finiteness-integral}, there are only finitely many isomorphism classes of totally real number fields $F$
	that admit a trace Euclidean lattice satisfying the corresponding
	integrality condition. For each fixed $F$,
	there are only finitely many isometry classes of such
	lattices over $F$, taken over all ranks $n\ge1$.
	
	(ii) Neither the degree $d$ nor the rank $n$ is fixed in advance:
	the finiteness assertions hold as both parameters vary simultaneously.
	In particular, they include finiteness for each fixed rank $n\ge1$
	with arbitrary field degree, and for each fixed degree $d\ge1$
	with arbitrary rank.
	
	(iii) The numerical bounds in
	Tables \ref{tab:admissible-classical}--\ref{tab:max-rank-degree-norm}
	are obtained from the necessary conditions
	\eqref{eq:admissible-pairs}.
	Using $\beta_d$ from \eqref{eq:beta-discriminant}, we compute
	$H(n,d)$ in \eqref{eq:Hd} and
	$h_{\mathfrak{s}}(n,d)$ and $h_{\mathfrak{n}}(n,d)$
	in \eqref{eq:def-h} with Mathematica, for every pair
	\begin{align*}
		1\le n\le34 \quad\text{and}\quad 1\le d\le14,
	\end{align*}
	as permitted by Corollary \ref{cor:finite-grid}. These admissible pairs, together with the corresponding upper
	bounds for $\nr(\mathfrak{v}(L))$ given by \eqref{eq:Hd},
	are listed in Tables \ref{tab:admissible-classical}
	and \ref{tab:admissible-norm}. The values of $h_{\mathfrak{s}}(n,d)$ and
	$h_{\mathfrak{n}}(n,d)$ are rounded to six decimal places,
	and the displayed upper bounds for $\nr(\mathfrak{v}(L))$
	are rounded upwards to six significant digits.
	Here admissibility means only that the pair satisfies the
	necessary conditions \eqref{eq:admissible-pairs}, but it does not
	assert the existence of a corresponding trace Euclidean lattice.
	Tables \ref{tab:max-rank-degree-classical}
	and \ref{tab:max-rank-degree-norm} summarize the resulting
	upper bounds $N_d$ for the rank at each fixed degree $d$
	and $D_n$ for the degree at each fixed rank $n$.
	In particular,
	\begin{align*}
		\begin{cases}
			n\le12,\;d\le6
			&\text{in the classic integral case},\\
				n\le29,\; d\le9
			&\text{in the integral case}.
		\end{cases} 
	\end{align*}
	In the integral case, the only admissible rank for $d=7,8,9$
	is $2$. Indeed, every integral lattice of rank one is
	classic integral, so in both cases,
	$H(1,d)\ge 1$ implies that $d\le6$.
	
	(iv) The bounds above use only \eqref{eq:admissible-pairs}.
	When $d=1$, the complete classification of Clark and Jagy
	\cite[\S6.4 and Table 3]{clark_ADC-II-2014} gives $n\le10$.
	For $d\ge2$, Table \ref{tab:max-rank-degree-norm} gives $n\le12$.
	Therefore, every positive definite integral trace Euclidean
	lattice satisfies $n\le12$.
\end{re}

Let $x \in V$ and $y \in L$. Write $c_i = \sigma_i(Q(x-y)) \ge 0$ for $i=1,\dots,d$. For $p \in (0,\infty)$, set
\begin{align*}
	M_{p}(x,y)\coloneq\left( \frac1d \sum_{i=1}^d c_{i}^{\,p} \right)^{1/p},
	\quad
	M_{0}(x,y)\coloneq\Bigl( \prod_{i=1}^d c_{i} \Bigr)^{1/d},\quad
	M_{\infty}(x,y)\coloneq\max_{1\le i\le d} c_{i}.
\end{align*} 
Then the trace, norm, and house Euclidean properties can be viewed as
special cases of the $p$-norm Euclidean property.
\begin{defn}\label{defn:p-norm}
	Let $p\in [0,\infty]$. We say that $L$ is $p$-norm Euclidean if for every $x\in V$, there exists $y\in L$ such that $M_p(x,y) < 1$.
\end{defn}
By the power mean inequality, $M_p(x,y)\le M_q(x,y)$ for $p<q$. Thus, for $p\in [0,\infty]$, we have the following relation:
\begin{align}\label{p-norm-rela}
	\text{$L$ is house Euclidean ($p=\infty$)}\; \Rightarrow\; \text{$L$ is $p$-norm Euclidean}\;  \Rightarrow\; \text{$L$ is norm Euclidean ($p=0$)}.
\end{align}
Note that $M_{1}(x,y)=\tr_{F/\mathbb{Q}}(Q(x-y))/d$. Hence $L$ is $1$-norm Euclidean if and only if it is trace Euclidean, which in turn implies that $L$ is norm Euclidean, by the inequality
\begin{align*}
	\nr_{F/\mathbb{Q}}(Q(x-y))
	\le
	\left(
	\frac{\tr_{F/\mathbb{Q}}(Q(x-y))}{d}
	\right)^d
	<1.
\end{align*}
Every norm Euclidean lattice is $\mathcal{O}_{F}$-maximal.
Indeed, if $L\subsetneq L'$ with $L'$ integral, choose $x\in L'\setminus L$. For every $y\in L$, since $L'$ is integral we have $Q(x-y)\in\mathcal{O}_{F}$, and positive definiteness implies $Q(x-y)\neq 0$, so
\begin{align*}
	\nr_{F/\mathbb{Q}}(Q(x-y))\ge 1,
\end{align*} 
a contradiction (see also \cite[Proposition~11]{clark_ADC-I-2012}). Thus, for $p\in[1,\infty]$, we further have 
\begin{align}\label{implication-Euclidean-maximal}
	\begin{array}{ccccc}
		\text{$L$ is house Euclidean} & \Longrightarrow & \text{$L$ is $p$-norm Euclidean} & \Longrightarrow & \text{$L$ is $1$-norm Euclidean} \\
		& & & & \big\Updownarrow \\
		\text{$L$ is $\mathcal{O}_{F}$-maximal} & \Longleftarrow & \text{$L$ is norm Euclidean} & \Longleftarrow & \text{$L$ is trace Euclidean}
	\end{array}
\end{align}

Combining these implications with Theorem \ref{thm:finiteness-integral}, we have the following corollary.
\begin{cor}\label{cor:p-norm}
	Let $p\in[1,\infty]$. As $F$ varies over all totally real number fields, there are only finitely many positive definite integral $p$-norm Euclidean $\mathcal{O}_{F}$-lattices of arbitrary rank $n\ge1$, up to isometry. In particular, this holds for house Euclidean lattices.
\end{cor}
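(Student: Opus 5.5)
The corollary should follow at once from the implication chain \eqref{implication-Euclidean-maximal} combined with Theorem \ref{thm:finiteness-integral}. The plan is to show that, for every $p\in[1,\infty]$, the class of positive definite integral $p$-norm Euclidean $\mathcal{O}_F$-lattices (over all totally real $F$ and all ranks $n\ge1$) is contained in the class of positive definite integral trace Euclidean lattices. Finiteness up to isometry then transfers directly from the theorem.

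The containment comes from the power mean inequality. Fix $x\in V$. If $L$ is $p$-norm Euclidean, there is $y\in L$ with $M_p(x,y)<1$. The quantities $c_i=\sigma_i(Q(x-y))$ are nonnegative because $L$ is positive definite, so for $1\le p$ we have
\begin{align*}
	\frac{1}{d}\tr_{F/\mathbb{Q}}(Q(x-y)) = M_1(x,y)\le M_p(x,y)<1 .
\end{align*}
For $p=\infty$ one uses $M_1\le M_\infty$ in the same way. Hence $\tr_{F/\mathbb{Q}}(Q(x-y))<d$, so the same $y$ witnesses the trace Euclidean condition at $x$. Since $x$ was arbitrary, $L$ is trace Euclidean.

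Integrality and positive definiteness are hypotheses common to both classes. Therefore every lattice in the corollary is a positive definite trace Euclidean integral $\mathcal{O}_F$-lattice. By Theorem \ref{thm:finiteness-integral}, such lattices fall into finitely many isometry classes as $F$, $d$ and $n$ vary. The house Euclidean case is simply $p=\infty$.

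There is no genuine obstacle here. The only point needing care is that isometry classes are counted across different fields $F$. The corollary uses exactly the same notion of isometry as Theorem \ref{thm:finiteness-integral}, so the finite set of classes from the theorem contains all the $p$-norm Euclidean ones, and no further argument is needed. As a byproduct, the bound $n\Delta_F^{1/d}<4\pi e$ also holds for these lattices.
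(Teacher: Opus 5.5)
Your proposal is correct and is essentially the paper's proof. In both, the power mean inequality $M_1(x,y)\le M_p(x,y)$ for $p\ge1$ shows that every $p$-norm Euclidean lattice is trace Euclidean, and finiteness then follows from Theorem \ref{thm:finiteness-integral}.
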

\begin{proof}
	For $p\ge1$, the power mean inequality gives $M_{1}(x,y)\le M_{p}(x,y)$. Thus every $p$-norm Euclidean lattice is trace Euclidean, and Theorem \ref{thm:finiteness-integral} applies.
\end{proof}
The following theorem classifies all trace Euclidean lattices of rank one over real quadratic fields.
\begin{thm}\label{thm:rank-one-classification}
	Let $F=\mathbb{Q}(\sqrt m)$ be a real quadratic field with square-free $m>1$,
	and let $L$ be a positive definite integral $\mathcal{O}_{F}$-lattice of rank one. Then $L$ is trace Euclidean if and only if
	$L\cong\langle a\rangle$ over $F$ for one of the following pairs:
	\begin{align*}
		(m,a)\in \{(2,1),(3,2+\sqrt{3}),(5,1),(5,2),(13,1),(21,(5+\sqrt{21})/2)\}.
	\end{align*}
	In particular, every such lattice is free, and there are exactly six
	isometry classes as $F$ varies over all real quadratic fields.
\end{thm}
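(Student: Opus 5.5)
Every rank one lattice is $L=\mathfrak{b}v$ with $Q(v)=a\gg0$, and $Q(\xi v)=a\xi^2$. So $L$ is trace Euclidean exactly when, for every $x\in F$, some $y\in\mathfrak{b}$ satisfies $\tr_{F/\mathbb{Q}}(a(x-y)^2)<2$. Integrality means $a\mathfrak{b}^2\subseteq\mathcal{O}_F$, so $L$ is classic integral. The plan is to recast this as a covering problem for the planar $\mathbb{Z}$-lattice $\Lambda=\sigma(\mathfrak{b})\subset\mathbb{R}^2$ with the positive definite binary form
\begin{align*}
q(\xi)=\tr_{F/\mathbb{Q}}(a\xi^2)=\sigma_1(a)\sigma_1(\xi)^2+\sigma_2(a)\sigma_2(\xi)^2 .
\end{align*}
Since $F$ is dense in $F\otimes\mathbb{R}=\mathbb{R}^2$ and the condition on $x$ is strict, I expect to show that the trace Euclidean property is equivalent to a covering radius condition. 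Writing $\mu(q)=\sup_{x\in\mathbb{R}^2}\min_{y\in\Lambda}q(x-y)$, the condition should be $\mu(q)<2$, up to the boundary case where the supremum is attained at a deep hole. A deep hole is a circumcenter of an obtuse/acute Delaunay triangle and therefore lies in $F$, so in that case I would need $\mu<2$ strictly.

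\textbf{Step 1: necessary bounds.} By $\mathcal{O}_F$-maximality in \eqref{implication-Euclidean-maximal}, $a\mathfrak{b}^2$ has no proper integral overlattice. This makes $a\mathfrak{b}^2$ a squarefree-type ideal. The determinant of $q$ on $\Lambda$ is $\nr(a\mathfrak{b}^2)\Delta_F$. Every binary form satisfies $\mu(q)\ge \tfrac{2}{3\sqrt3}\cdot2\sqrt{\det}$, with equality for the hexagonal form, and this gives
\begin{align*}
\nr(\mathfrak{v}(L))\,\Delta_F\le \tfrac{27}{4}.
\end{align*}
Hence $\Delta_F\in\{5,8,12,13,21\}$ (also $\le 6$ bounds, so check $17$ is excluded), and $\nr(a\mathfrak{b}^2)$ is $1$, or small in the case $\Delta_F=5$. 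For $\Delta_F\equiv1\pmod 4$ the discriminant is $m$, and for $m\equiv2,3$ it is $4m$. This leaves $m\in\{2,3,5,13,21\}$ together with finitely many ideal classes and norms. These fields have class number $1$, so $\mathfrak{b}$ is principal and $L$ is free; I rescale so that $L=\langle a\rangle$. Moreover $a$ is determined modulo squares of units, and after the bound $\nr(a)\in\{1,2,\dots\}$ only finitely many $a$ up to $\mathcal{O}_F^{\times2}$ remain. Totally positive units modulo squares have representatives $1$ or the fundamental unit $\varepsilon$ when $\nr\varepsilon=1$; this is where $2+\sqrt3$ and $(5+\sqrt{21})/2$ arise.

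\textbf{Step 2: exact covering radius.} For each surviving candidate pair $(m,a)$, I reduce the binary form $q$ on $\mathcal{O}_F$ with the basis $1,\omega$ to a reduced form $Ax^2+Bxy+Cy^2$ with $0\le B\le A\le C$. I then apply the Clark--Jagy covering formula \cite[Theorem 5.1(b)]{clark_ADC-II-2014}, which gives the covering radius of a reduced binary form explicitly as the circumradius of the acute Delaunay triangle:
\begin{align*}
\mu=\frac{AC(A+C-B)}{4AC-B^2}.
\end{align*}
Finally I compare $\mu$ with $2$. The six listed pairs should give $\mu<2$; for example $m=5$, $a=1$ gives $q=2x^2+2xy+3y^2$ and $\mu=\tfrac{6\cdot3}{20}<2$. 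All other candidates should give $\mu\ge2$. Examples to test are $m=2$ with $a=\varepsilon$ not totally positive (discard), $m=3$ with $a=1$, $m=13$ with $a=\varepsilon$ (norm $-1$, so none), $m=5$ with $a=2$ versus $a$ of norm $5$, and so on.

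\textbf{Main obstacle.} I expect the main difficulty to be the equivalence between the condition over $x\in F$ and $\mu<2$. The subtle case is $\mu=2$ exactly, and I would handle it through rationality of the deep holes. The second difficulty is the careful enumeration in Step 1, in particular the case $m=5$ where norms up to $27/20$ allow $\nr(a)=1$ only. The case $a=2$ needs care with the maximality argument, since $\langle2\rangle$ is integral but its norm ideal is $2$. The bound in Step 1 must therefore use $\nr(\mathfrak{n})$ appropriately or a direct check, and I would redo the determinant bound with the correct ideal.
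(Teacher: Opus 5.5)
There is a genuine error in Step 1. The thinnest lattice covering of the plane is the hexagonal one, with density $2\pi/(3\sqrt3)$. So $\pi\mu/\sqrt{D}\ge 2\pi/(3\sqrt3)$, i.e.\ $\mu\ge\frac{2}{3\sqrt3}\sqrt{D}$, where $D=\det G=\Delta_F\nr(a\mathfrak{b}^2)$. Your version has an extra factor $2$; you can see this on $x^2+xy+y^2$, where $\mu=1/3$ and $D=3/4$.

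The resulting bound $\Delta_F\nr(a\mathfrak{b}^2)\le 27/4$ is false. Five of the six lattices in the statement violate it: for example, $\langle1\rangle$ over $\mathbb{Q}(\sqrt2)$ has $D=8$ and $\rho=3/2<2$, and $\langle2\rangle$ over $\mathbb{Q}(\sqrt5)$ has $D=20$. So your list $\Delta_F\in\{5,8,12,13,21\}$ does not follow from anything you proved, and dropping $17$ is unjustified. The difficulty you sense with $a=2$ is also misdiagnosed. For rank one, $\mathfrak{s}(L)=\mathfrak{n}(L)=\mathfrak{v}(L)=a\mathfrak{b}^2$, so there is no norm-versus-volume issue; the only problem is the constant.

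With the correct constant, $\mu<2$ gives $\Delta_F\nr(a\mathfrak{b}^2)<27$. Hence $\Delta_F\in\{5,8,12,13,17,21,24\}$ and $\nr(a)<27/\Delta_F$, so you must also treat $m=6$, $m=17$, and the non-unit $a$. Your class-number-one step still makes $L$ free for all seven fields, and the finite check then closes:
\begin{itemize}
\item Norms $2$ and $3$ are excluded by inertness ($3$ for $m=2$; $2,3$ for $m=5$; $2$ for $m=13$). For $m=3$, norm $2$ is excluded because $\mathbb{Z}[\sqrt3]$ has no totally positive element of norm $2$.
\item The pairs $(17,1)$, $(21,1)$, $(6,1)$, $(6,5+2\sqrt6)$ and $(5,(5+\sqrt5)/2)$ give $\rho=81/34$, $121/42$, $7/2$, $5/2$ and $5/2$, all at least $2$.
\item The pairs $(3,1)$ and $(2,2+\sqrt2)$ give exactly $\rho=2$. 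So the strict criterion (your circumcenter-rationality argument, or Lemma~\ref{lem:phi-Euclidean-trace}) is genuinely needed there.
\end{itemize}

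Once repaired, your route is a valid alternative: Kershner's bound, then enumeration by norms and unit squares, then the Clark--Jagy formula, with freeness from known class numbers. The paper avoids covering-density theorems entirely. After Gauss reduction (with $B$ the off-diagonal Gram entry), it uses $A=\tr_{F/\mathbb{Q}}(a\beta_1^2)\ge2$ and the identity $\rho-C/4=C(A-B)^2/(4D)$ to force $2\le A\le C\le7$. It then enumerates $22$ triples with $D\le21$, which already discards $\Delta_F=24$ and $D=25$. Finally, it proves freeness directly from the norm equation for $b_1=a\beta_1^2$ rather than quoting class numbers.
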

Recall that a totally real number field $F$ is \emph{Euclidean} if its ring of integers $\mathcal{O}_{F}$ is a Euclidean domain; that is, there exists a function $\psi\colon \mathcal{O}_{F}\setminus\{0\}\to\mathbb{N}$ such that, for any $a,b\in\mathcal{O}_{F}$ with $b\neq 0$, there exist $q,r\in\mathcal{O}_{F}$ satisfying $a=qb+r$ and either $r=0$ or $\psi(r)<\psi(b)$.

If one can take
$\psi(\cdot)=|\nr_{F/\mathbb{Q}}(\cdot)|$,
then $F$ is called \textit{norm Euclidean}.
Let $m>1$ be a square-free integer. It is known from \cite[Theorem 4.4]{lemmermeyer_euclidean_1995} that a real quadratic field $\mathbb{Q}(\sqrt{m})$ is norm Euclidean if and only if
\begin{align*}
	m\in \{2,3,5,6,7,11,13,17,19,21,29,33,37,41,57,73\}.
\end{align*}
We introduce the following definition as the trace analogue of norm Euclidean fields.
\begin{defn}
	\label{defn:trace-euclidean-field}
	Let $F$ be a totally real number field of degree $d$. 
	We say that $F$ is trace Euclidean if for every $x\in F$, 
	there exists $y\in\mathcal{O}_F$ such that
	\begin{align*}
		\tr_{F/\mathbb{Q}}\bigl((x-y)^2\bigr) < d.
	\end{align*}
\end{defn}
By Theorem \ref{thm:rank-one-classification} with $a=1$, i.e., $Q(x)=x^{2}$, we have the following corollary.
\begin{cor}\label{cor:trace-euclidean-quadratic}
	Let $F=\mathbb{Q}(\sqrt{m})$ be a real quadratic field with square-free $m>1$.
	Then $F$ is trace Euclidean if and only if $m\in\{2,5,13\}$.
\end{cor}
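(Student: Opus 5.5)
The plan is to apply Theorem \ref{thm:rank-one-classification} with $a=1$. By Definition \ref{defn:trace-euclidean-field}, $F$ is trace Euclidean exactly when the free rank-one lattice $L=\mathcal{O}_F$ on $V=F$ with $Q(x)=x^2$, i.e.\ $L\cong\langle 1\rangle$, is trace Euclidean in the sense of Definition \ref{defn:trace-Euclidean}. Indeed, for $x\in V=F$ and $y\in L=\mathcal{O}_F$ we have $Q(x-y)=(x-y)^2$, so the two conditions are literally the same. This lattice is positive definite and integral, since $\mathfrak{n}(\langle 1\rangle)=\mathcal{O}_F$, so Theorem \ref{thm:rank-one-classification} applies to it.

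The key step is then the following. By the theorem, $\langle 1\rangle$ over $\mathbb{Q}(\sqrt m)$ is trace Euclidean if and only if $\langle 1\rangle\cong\langle a\rangle$ over $F$ for one of the six listed pairs $(m,a)$. This requires comparing $\langle 1\rangle$ with the non-unit entries $a=2$ (for $m=5$), $a=2+\sqrt3$, and $a=(5+\sqrt{21})/2$.

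Recall that $\langle a\rangle\cong\langle b\rangle$ as $\mathcal{O}_F$-lattices (the free lattice $\mathcal{O}_Fe$ with $Q(e)=a$) holds if and only if $b=u^2a$ for some unit $u\in\mathcal{O}_F^\times$. An isometry sends $e$ to $ue'$ with $u$ a generator of $\mathcal{O}_F$, and hence $u$ is a unit.

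We now check the three non-unit-looking entries.
\begin{itemize}
\item For $m=5$, $a=2$ has norm $4\neq 1$, so it is not a square of a unit. Hence the pair $(5,2)$ gives a class different from $\langle1\rangle$.
\item For $m=3$, $a=2+\sqrt3$ is a unit of norm $1$. However, the fundamental unit of $\mathbb{Q}(\sqrt3)$ is $2+\sqrt3$ itself, so $a$ is not a square of a unit. Since the other listed pairs have different fields, $m=3$ is excluded.
\item For $m=21$, $a=(5+\sqrt{21})/2$ is the fundamental unit of $\mathbb{Q}(\sqrt{21})$, so again it is not a unit square, and $m=21$ is excluded.
\end{itemize}

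The remaining pairs with $a=1$ are exactly $m\in\{2,5,13\}$, which gives the corollary.

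The only potential subtlety is justifying these non-square claims. Checking that $2+\sqrt3$ and $(5+\sqrt{21})/2$ are fundamental units, and hence not squares of units, is routine; for instance, one can verify that no unit $u$ satisfies $u^2=a$ by computing $\sqrt{a}$ explicitly. This is immediate, though one must also remember that $-1$ is irrelevant because $u^2=(-u)^2$. With these checks done, the corollary follows directly from the theorem.
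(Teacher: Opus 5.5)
Your argument is correct, but it finishes differently from the paper.

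\textbf{The paper's route.} It uses Theorem~\ref{thm:rank-one-classification} only to cut down to $m\in\{2,3,5,13,21\}$. It then evaluates $\rho_T(\langle 1\rangle)$ directly from the closed formulas in Proposition~\ref{prop:trace-euclidean-quadratic}. The formula $(m+1)/2$ gives $3/2$ and $2$ for $m=2,3$. The formula $(m+1)^2/(8m)$ gives $9/10$, $49/26$, $121/42$ for $m=5,13,21$. The strict criterion $\rho_T<2$ then singles out $\{2,5,13\}$.

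\textbf{Your route.} You use the full isometry-class content of the theorem, together with the criterion $\langle a\rangle\cong\langle b\rangle\iff a\in b\,(\mathcal{O}_F^\times)^2$ for free unary lattices. This reduces the exclusion of $m=3,21$ to the arithmetic fact that $2+\sqrt3$ and $(5+\sqrt{21})/2$ are not unit squares.

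\textbf{Comparison.} Your approach avoids any covering-radius computation. However, it relies on the sharper ``only if'' content of the classification (the exact isometry classes, not merely which fields occur) and on unit-group facts. The paper's approach needs only the weaker field restriction and is purely numerical. It also shows that $m=3$ fails exactly at the boundary, $\rho_T=2$, which is where the strict inequality in Definition~\ref{defn:trace-euclidean-field} matters.

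Two small remarks. First, your non-square claims can be checked without identifying fundamental units:
\[
2+\sqrt3=\bigl((\sqrt2+\sqrt6)/2\bigr)^2,\qquad \tfrac{5+\sqrt{21}}{2}=\bigl((\sqrt3+\sqrt7)/2\bigr)^2 .
\]
Neither square root lies in the respective field, since that would force $\sqrt2\in\mathbb{Q}(\sqrt3)$, respectively $\sqrt3\in\mathbb{Q}(\sqrt{21})$. Second, your check of the pair $(5,2)$ is unnecessary, because $(5,1)$ is already in the list.
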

Take the quadratic space $V=F$ with quadratic form $Q(x)=x^2$ and lattice $L=\mathcal{O}_F$. Then 
\begin{align*}
	&\text{$F$ is a trace Euclidean (resp. norm Euclidean) field}\\
	\Longleftrightarrow \quad&\text{$L$ is a trace Euclidean (resp. norm Euclidean)  $\mathcal{O}_F$-lattice}.
\end{align*}
In particular, Theorem \ref{thm:finiteness-classic} with $n=1$ shows that there are only finitely many totally real trace Euclidean fields, up to isomorphism. Combining \eqref{implication-Euclidean-maximal} with the fact that every Euclidean domain is a PID, we obtain
\begin{align*}
	\text{$F$ is trace Euclidean} \implies \text{$F$ is norm Euclidean} \implies \text{$F$ is Euclidean} \implies \text{$\mathcal{O}_{F}$ is PID}.
\end{align*}
However, the converse of the first implication does not hold in general. For example, the quadratic field $\mathbb{Q}(\sqrt{3})$ is norm Euclidean, but is not trace Euclidean, by Corollary \ref{cor:trace-euclidean-quadratic}.   

The paper is organized as follows.
In Section \ref{sec:preliminaries}, we study the analytic properties of the function $\Phi_{\mathbb{R}}$ and give an equivalent condition  for trace Euclidean lattices.
In Section \ref{sec:volume-trace}, we establish the relation between the covolume and the trace covering radius, and then derive an upper bound for the norm of the volume ideal of a trace Euclidean lattice.
In Section \ref{sec:root-discriminant-bounds}, we derive root discriminant bounds and obtain effective estimates. In Section \ref{sec:finiteness}, we prove the finiteness theorems. In Section \ref{sec:trace-euclidean-fields}, we classify all positive definite integral trace Euclidean lattices of rank one over real quadratic fields and determine all trace Euclidean real quadratic fields.

\section{Preliminaries}
\label{sec:preliminaries}
In this section, we assume that $F$ is totally real of degree $d$. Let $V$ be an $n$-dimensional quadratic space over $F$ associated with a quadratic form $Q$, and let $L$ be a positive definite $\mathcal{O}_{F}$-lattice on $V$.
For each real embedding $\sigma_i \colon F \to \mathbb{R}$, let $V_i = V \otimes_{F,\sigma_i} \mathbb{R}$ with induced quadratic form $Q_i \colon V_i \to \mathbb{R}$ given by $Q_i(x \otimes 1) = \sigma_i(Q(x))$. We also use $\sigma_{i}$ to denote the embedding $\sigma_i \colon V \to V_i$, $\sigma_{i}(x)=x\otimes 1$. Write $V_{\mathbb{R}}=V\otimes_{\mathbb{Q}}\mathbb{R}$ and
let $\sigma_{V}\colon V\to V_{\mathbb{R}}$ be the natural embedding 
$\sigma_{V}(x)=x\otimes 1$.

Choose a $\mathbb{Z}$-basis $\{v_{1},\ldots,v_{nd}\}$ of $L$. For
$u=(u_{1},\ldots,u_{nd})^{\mathsf{T}}\in\mathbb{R}^{nd}$, define
$\varphi\colon\mathbb{R}^{nd}\to V_{\mathbb{R}}$ by
\begin{align*}
	\varphi(u)\coloneq\sum_{i=1}^{nd}u_{i}\sigma_{V}(v_{i}).
\end{align*}
Then we have the following $\mathbb{R}$-linear isomorphisms
\begin{align}\label{Riso}
	\mathbb{R}^{nd}\xrightarrow{\varphi}V_{\mathbb{R}}
	\cong\prod_{i=1}^{d}V_{i}.
\end{align} 
Indeed, since $F$ is totally real, the map
$\alpha\otimes r\mapsto(r\sigma_{1}(\alpha),\ldots,r\sigma_{d}(\alpha))$
gives an $\mathbb{R}$-linear isomorphism
$F\otimes_{\mathbb{Q}}\mathbb{R}\cong\mathbb{R}^{d}$. Hence
\begin{align}\label{VR}
	V_{\mathbb{R}}
	&\cong V\otimes_{F}(F\otimes_{\mathbb{Q}}\mathbb{R}) \cong\prod_{i=1}^{d}(V\otimes_{F,\sigma_{i}}\mathbb{R})
	=\prod_{i=1}^{d}V_{i}.
\end{align}
Since $\mathbb{Q}\mathcal{O}_{F}=F$ and $FL=V$, we have
\begin{align*}
	V=FL=(\mathbb{Q}\mathcal{O}_{F})L=\mathbb{Q}L.
\end{align*}
Thus $\{v_{1},\ldots,v_{nd}\}$ is a $\mathbb{Q}$-basis of $V$, and
$\{\sigma_{V}(v_{1}),\ldots,\sigma_{V}(v_{nd})\}$ is an
$\mathbb{R}$-basis of $V_{\mathbb{R}}$. This proves that $\varphi$ is an
$\mathbb{R}$-linear isomorphism.

Under the isomorphism \eqref{VR}, $\sigma_{V}(x)=
(\sigma_{1}(x),\ldots,\sigma_{d}(x))$ for $x\in V$.
Define the maps
\begin{align*}
	\sigma_{F} &\colon F \longrightarrow \mathbb{R}^{d}, \quad \alpha \mapsto (\sigma_{1}(\alpha), \dots, \sigma_{d}(\alpha)), \\
	Q_{\mathbb{R}} &\colon V_{\mathbb{R}}\to \mathbb{R}^{d},\quad x\mapsto (Q_{1}(x_{1}),\ldots,Q_{d}(x_{d})),\quad x_{i}\in V_{i}.
\end{align*}
Then the following diagram commutes,
\begin{equation}\label{com-diag}
	\begin{CD}
		V @>Q>> F \\
		@V\sigma_{V} VV @VV\sigma_{F} V \\
		V_{\mathbb{R}} @>>Q_{\mathbb{R}}> \mathbb{R}^{d}
	\end{CD}
\end{equation}
When no confusion can arise, we omit the subscripts and write $\sigma$
for both $\sigma_F$ and $\sigma_V$. The image $\Lambda=\sigma(L)$ is a full rank
$\mathbb{Z}$-lattice in $V_{\mathbb{R}}$. Hence
$\sigma(V)=\mathbb{Q}\Lambda$ is dense in $V_{\mathbb{R}}$.

Write $\phi=\tr_{F/\mathbb{Q}}$ for short and define the function $\phi_{\mathbb{R}}\colon\mathbb{R}^d\to\mathbb{R}$ by $
	\phi_{\mathbb{R}}(\alpha_{1},\dots,\alpha_{d}) = \sum_{i=1}^d \alpha_{i}$.
From the commutative diagram
\eqref{com-diag}, we have
\begin{align}\label{phiQ}
	\phi(Q(x))
	=\phi_{\mathbb{R}}(\sigma_F(Q(x)))
	=\phi_{\mathbb{R}}(Q_{\mathbb{R}}(\sigma_V(x)))
\end{align}
for $x\in V$. Since each $Q_i$ is positive definite, $\phi_{\mathbb{R}}\circ Q_{\mathbb{R}}$ is nonnegative on
$V_{\mathbb{R}}$. 

We define the functions
\begin{align*}
	\Phi(x) &\coloneq \inf_{y\in L} \phi(Q(x-y)), \quad x\in V, \\
	\Phi_{\mathbb{R}}(z) &\coloneq\inf_{\lambda\in \sigma(L)} \phi_{\mathbb{R}}(Q_{\mathbb{R}}(z-\lambda)), \quad z\in V_{\mathbb{R}},
\end{align*}
and define the \textit{trace covering radius} of $L$ by
\begin{align*}
	\rho_T(L)
	\coloneq
	\sup_{z\in V_{\mathbb{R}}}\Phi_{\mathbb{R}}(z)
	=
	\sup_{z\in V_{\mathbb{R}}}
	\inf_{\lambda\in\sigma(L)}
	\phi_{\mathbb{R}}
	\left(Q_{\mathbb{R}}(z-\lambda)\right).
\end{align*}
Let $B_{i}$ be the symmetric bilinear form
associated with $Q_{i}$. For $z=(z_{1},\ldots,z_{d})^{\mathsf{T}},\lambda=(\lambda_{1},\ldots,\lambda_{d})^{\mathsf{T}}\in V_{\mathbb{R}}$, where $z_{i},\lambda_{i}\in V_{i}$, we define
\begin{align*}
	q_{T}(z)
	&\coloneq
	\phi_{\mathbb{R}}\bigl(Q_{\mathbb{R}}(z)\bigr)
	=
	\sum_{i=1}^{d}Q_{i}(z_{i})\quad\text{and}\quad
	B_{T}(z,\lambda)
	\coloneq
	\sum_{i=1}^{d}B_{i}(z_{i},\lambda_{i}),
\end{align*}
and $
\|z\|_{T}
\coloneq \sqrt{q_{T}(z)}$. From the commutative diagram
\eqref{com-diag} again, we have
\begin{align}\label{eq:trace-form-compatibility}
	q_{T}(\sigma(x))
	=
	\tr_{F/\mathbb{Q}}(Q(x)) \quad \text{and}\quad
	B_{T}(\sigma(x),\sigma(y))
	&=
	\tr_{F/\mathbb{Q}}(B(x,y))
\end{align}
for $x,y\in V$. In particular,
\begin{align}\label{rhoTL=supinf}
	\rho_T(L)
	=
	\left(
	\sup_{z\in V_{\mathbb{R}}}
	\inf_{\lambda\in\Lambda}
	\|z-\lambda\|_T
	\right)^2.
\end{align}
Thus $\rho_{T}(L)$ is the square of the covering radius with respect to $\|\cdot\|_{T}$.

\begin{prop}\label{prop:trace-coordinates}
	 The following statements hold.
	\begin{enumerate}[label=\rm(\roman*), itemindent=0em, leftmargin=*]
		\item $
			\varphi(\mathbb{Z}^{nd})=\Lambda$ and $ \varphi(\mathbb{Q}^{nd})=\sigma(V)$.
			
		\item The quadratic form $q(u)\coloneq q_{T}(\varphi(u))$,
		$u\in\mathbb{R}^{nd}$, is positive definite and integral.
		\item For every $u\in\mathbb{R}^{nd}$, we have
		\begin{align*}
			\Phi_{\mathbb{R}}(\varphi(u))
			=\inf_{v\in\mathbb{Z}^{nd}}q(u-v).
		\end{align*}
	\end{enumerate}
\end{prop}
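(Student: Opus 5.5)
The plan is to verify the three items in order, using only the definitions of $\varphi$, $\sigma_V$, and the compatibility \eqref{eq:trace-form-compatibility}.

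For (i), we use that $\{v_1,\ldots,v_{nd}\}$ is a $\mathbb{Z}$-basis of $L$ and, as shown above, a $\mathbb{Q}$-basis of $V$. Since $\sigma_V$ is $\mathbb{Q}$-linear, $\varphi(u)=\sigma_V\bigl(\sum u_iv_i\bigr)$ for $u\in\mathbb{Q}^{nd}$. Hence $\varphi(\mathbb{Z}^{nd})=\sigma_V(L)=\Lambda$ and $\varphi(\mathbb{Q}^{nd})=\sigma_V(\mathbb{Q}L)=\sigma_V(V)$.

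For (ii), write $q(u)=\sum_{i,j}u_iu_j\,B_T(\sigma(v_i),\sigma(v_j))$, which is valid because $\varphi$ is linear and $q_T$ is the quadratic form of $B_T$. By \eqref{eq:trace-form-compatibility}, the Gram entries are $\tr_{F/\mathbb{Q}}(B(v_i,v_j))$. Positive definiteness follows because $q_T(z)=\sum_i Q_i(z_i)$ with each $Q_i$ positive definite, so $q_T$ is positive definite on $V_{\mathbb{R}}$; composing with the isomorphism $\varphi$ preserves this.

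The main point is integrality, meaning $q(\mathbb{Z}^{nd})\subseteq\mathbb{Z}$, which matches the paper's convention that integral refers to the norm. For $u\in\mathbb{Z}^{nd}$, set $x=\sum u_iv_i\in L$. Then $q(u)=q_T(\sigma(x))=\tr_{F/\mathbb{Q}}(Q(x))$. Since $L$ is assumed integral, $Q(x)\in\mathfrak{n}(L)\subseteq\mathcal{O}_F$, so its trace is a rational integer. Thus the diagonal values lie in $\mathbb{Z}$. The off-diagonal terms $2\tr(B(v_i,v_j))=\tr\bigl(Q(v_i+v_j)-Q(v_i)-Q(v_j)\bigr)$ are also integers, so $q$ is integral in the sense of integer-valued forms. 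If the paper intends classic integrality of the Gram matrix, that would require $\mathfrak{s}(L)\subseteq\mathcal{O}_F$, which we do not assume. I expect this distinction to be the only subtle point, and I will state the integer-valued version.

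For (iii), substitute $z=\varphi(u)$ and $\lambda=\varphi(v)$ in the definition of $\Phi_{\mathbb{R}}$. By (i), $\lambda$ ranges over $\Lambda=\sigma(L)$ exactly as $v$ ranges over $\mathbb{Z}^{nd}$, and $\varphi$ is injective. Linearity gives $z-\lambda=\varphi(u-v)$, so $\phi_{\mathbb{R}}(Q_{\mathbb{R}}(z-\lambda))=q_T(\varphi(u-v))=q(u-v)$. Taking the infimum over $v$ yields the claimed identity.
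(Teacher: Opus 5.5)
Your proposal is correct and follows essentially the same route as the paper. Part (i) comes from the basis property of $\{v_i\}$ and the $\mathbb{Q}$-linearity of $\sigma$. Part (ii) gets positive definiteness from the isomorphism $\varphi$ and integrality from $q(m)=\tr_{F/\mathbb{Q}}\bigl(Q(\sum m_iv_i)\bigr)\in\tr_{F/\mathbb{Q}}(\mathcal{O}_F)\subseteq\mathbb{Z}$. Part (iii) is the substitution $\lambda=\varphi(v)$. Your reading of ``integral'' as $q(\mathbb{Z}^{nd})\subseteq\mathbb{Z}$ is exactly the sense the paper proves and uses.
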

\begin{proof}
	(i) Since $\{v_{1},\ldots,v_{nd}\}$ is a $\mathbb{Z}$-basis of $L$
	and $V=\mathbb{Q}L$, by the definition of $\varphi$ and the $\mathbb{Q}$-linearity of $\sigma$, we have
	\begin{align*}
		\varphi(\mathbb{Z}^{nd})
		&=\sum_{i=1}^{nd}\mathbb{Z}\sigma(v_{i})
		=\sigma(L)=\Lambda\quad\text{and}\quad
		\varphi(\mathbb{Q}^{nd}) =\sum_{i=1}^{nd}\mathbb{Q}\sigma(v_{i})
		=\sigma(V).
	\end{align*}
	
	(ii) Since $q_{T}$ is positive definite and $\varphi$ is an isomorphism,
	$q$ is positive definite. For $m=(m_{1},\ldots,m_{nd})^{\mathsf{T}}
	\in\mathbb{Z}^{nd}$, we have $\sum_{i=1}^{nd}m_{i}v_{i}\in L$.
	Since $L$ is integral, $Q(L)\subseteq\mathcal{O}_{F}$.
	By \eqref{eq:trace-form-compatibility},
	\begin{align*}
		q(m)=\tr_{F/\mathbb{Q}}
		\left(Q\left(\sum_{i=1}^{nd}m_{i}v_{i}\right)\right)
		\in\tr_{F/\mathbb{Q}}(\mathcal{O}_{F})\subseteq\mathbb{Z}.
	\end{align*}
	Thus $q(\mathbb{Z}^{nd})\subseteq\mathbb{Z}$, so $q$ is integral.
	
	(iii) By (i), the linearity of $\varphi$, and the definition of $q$, we see that
	\begin{align*}
		\Phi_{\mathbb{R}}(\varphi(u))
		&=\inf_{\lambda\in\Lambda}q_{T}(\varphi(u)-\lambda)=\inf_{v\in\mathbb{Z}^{nd}}
		q_{T}(\varphi(u-v))
		=\inf_{v\in\mathbb{Z}^{nd}}q(u-v).
	\end{align*}
\end{proof}

The next lemma collects some analytic properties of the function $\Phi_{\mathbb{R}}$.

\begin{lem}\label{lem:Phi}
	The following statements hold.
	\begin{enumerate}[label=\rm(\roman*), itemindent=0em, leftmargin=*]
		\item $\Phi_{\mathbb{R}}(z+\lambda) = \Phi_{\mathbb{R}}(z)$ for $z \in V_{\mathbb{R}}$ and $\lambda \in \sigma(L)$;
		\item $\Phi_{\mathbb{R}}$ is continuous on $V_{\mathbb{R}}$;
		\item $\sup_{z \in V_{\mathbb{R}}} \Phi_{\mathbb{R}}(z)=\Phi_{\mathbb{R}}(z_{0})$ for some $z_0 \in V_{\mathbb{R}}$;
		\item $\sup_{x \in V} \Phi(x) = \sup_{z \in V_{\mathbb{R}}} \Phi_{\mathbb{R}}(z)$;
		\item $\max_{x \in V} \Phi(x) = \max_{z \in V_{\mathbb{R}}} \Phi_{\mathbb{R}}(z)$.
	\end{enumerate}
\end{lem}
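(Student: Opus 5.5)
The plan is to work in the coordinates of Proposition \ref{prop:trace-coordinates}, where the problem reduces to a positive definite quadratic form on $\mathbb{R}^{nd}$. By part (iii) of that proposition, $\Phi_{\mathbb{R}}(\varphi(u))=\inf_{v\in\mathbb{Z}^{nd}}q(u-v)$.

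For (i), translation by $\lambda\in\sigma(L)$ permutes $\sigma(L)$, so the two infima run over the same set.

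For (ii), I will first show that the infimum is attained, and only over finitely many $v$ once $u$ is bounded. Since $q$ is positive definite, there is $c>0$ with $q(w)\ge c|w|^2$ on $\mathbb{R}^{nd}$. Restrict to $u$ in a bounded set $K$. Comparing with $v=0$ gives $q(u)\le C_K$, so only $v$ with $|u-v|\le\sqrt{C_K/c}$ are relevant. This is a finite set $S_K\subset\mathbb{Z}^{nd}$ independent of $u\in K$. On $K$, therefore, $\Phi_{\mathbb{R}}\circ\varphi$ is a minimum of finitely many continuous functions $u\mapsto q(u-v)$, hence continuous. Since $\varphi$ is a linear homeomorphism, $\Phi_{\mathbb{R}}$ is continuous on $V_{\mathbb{R}}$.

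For (iii), periodicity from (i) shows that $\Phi_{\mathbb{R}}$ takes all its values on the image under $\varphi$ of the compact fundamental domain $[0,1]^{nd}$. By (ii) it is continuous there, so it attains its supremum at some $z_0$.

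For (iv), I will first check that $\Phi(x)=\Phi_{\mathbb{R}}(\sigma(x))$ for $x\in V$. By \eqref{phiQ}, $\phi(Q(x-y))=\phi_{\mathbb{R}}(Q_{\mathbb{R}}(\sigma(x)-\sigma(y)))$, and $\sigma(L)=\Lambda$. This gives $\sup_V\Phi\le\sup_{V_{\mathbb{R}}}\Phi_{\mathbb{R}}$. For the reverse inequality, I will use that $\sigma(V)$ is dense in $V_{\mathbb{R}}$ together with the continuity from (ii): values of $\Phi_{\mathbb{R}}$ at points of $\sigma(V)$ approximate $\Phi_{\mathbb{R}}(z_0)$ arbitrarily well.

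For (v), the main obstacle is that $z_0$ need not lie in $\sigma(V)$, so I need rationality of the maximum. My plan is to show that the maximum of $f(u)=\min_{v}q(u-v)$ is attained at a rational point. For this I will use the integrality of $q$ from Proposition \ref{prop:trace-coordinates}(ii), which makes its Gram matrix $A$ rational.

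Near $u_0=\varphi^{-1}(z_0)$, write $f$ as the minimum of finitely many $q(u-v)$, where $v$ ranges over the set $S$ of minimizers at $u_0$; any other $v$ is strictly larger near $u_0$. The differences $q(u-v)-q(u-v')$ are affine in $u$ with rational coefficients. Consider the affine subspace $E$ cut out by the equalities $q(u-v)=q(u-v')$ for $v,v'\in S$. It is defined over $\mathbb{Q}$ and contains $u_0$.

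On $E$ near $u_0$, $f$ equals the single quadratic $g(u)=q(u-v_1)$, and $u_0$ is a local maximum of $g|_E$. A positive definite quadratic restricted to an affine subspace is strictly convex, so it has no interior local maximum unless $E=\{u_0\}$. Hence $E$ is a point. Since $E$ is the solution set of a rational linear system, that point $u_0$ is rational.

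Then $z_0=\varphi(u_0)\in\varphi(\mathbb{Q}^{nd})=\sigma(V)$ by Proposition \ref{prop:trace-coordinates}(i), so $z_0=\sigma(x_0)$ with $\Phi(x_0)=\Phi_{\mathbb{R}}(z_0)$. Both maxima therefore exist and agree, by (iii) and (iv).
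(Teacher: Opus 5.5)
Your proof is correct. For (i)--(iv) it matches the paper in substance; the real difference is in (v).

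\textbf{Parts (ii) and (iii).} The paper proves (ii) by checking upper and lower semicontinuity separately. For the lower bound it uses that only finitely many $\lambda\in\Lambda$ lie within a fixed $\|\cdot\|_T$-distance of $z_0$. You package the same discreteness into the statement that, on a bounded set, $\Phi_{\mathbb{R}}\circ\varphi$ is a minimum of finitely many continuous functions. For (iii) you use the compact set $\varphi([0,1]^{nd})$ where the paper uses the torus $V_{\mathbb{R}}/\Lambda$.

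\textbf{Part (v), the paper's route.} The paper does not prove rationality of a maximizer itself. It applies \cite[Proposition 4.1(a)]{clark_ADC-II-2014} to the integral form $q$; this is why it needs Proposition \ref{prop:trace-coordinates}(ii). That result gives some maximizer $u_0\in\mathbb{Q}^{nd}$.

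\textbf{Part (v), your route.} You prove the rationality directly.
\begin{enumerate}
\item Near $u_0$, the function is the minimum of $q(u-v)$ over the finite set $S$ of nearest points $v$.
\item The locus $E$ where these values coincide is a rational affine subspace through $u_0$.
\item Strict convexity of $q$ along $E$ forces $E=\{u_0\}$.
\item Hence $u_0$ is the unique solution of a consistent rational linear system, so it is rational.
\end{enumerate}
This is sound. The case $|S|=1$ needs no separate treatment, since then $E=\mathbb{R}^{nd}$ and strict convexity gives the contradiction at once.

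\textbf{What each route buys.} Your argument is self-contained and proves slightly more: every local maximizer is rational (it is a Voronoi vertex), not just some global one. It also gives $\rho_T(L)\in\mathbb{Q}$ immediately. It uses only rationality of the Gram matrix, which holds even without integrality since $B_T(\sigma(v_i),\sigma(v_j))=\tr_{F/\mathbb{Q}}(B(v_i,v_j))\in\mathbb{Q}$. The paper's route is shorter, at the cost of depending on an external result.
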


\begin{proof}
	(i) For any $\lambda \in \Lambda$, we have
	\begin{align*}
		\Phi_{\mathbb{R}}(z+\lambda) = \inf_{w\in\Lambda} \phi_{\mathbb{R}}\bigl(Q_{\mathbb{R}}(z-w+\lambda)\bigr)
		= \inf_{\lambda^{\prime}\in\Lambda} \phi_{\mathbb{R}}\bigl(Q_{\mathbb{R}}(z-\lambda^{\prime})\bigr) = \Phi_{\mathbb{R}}(z),
	\end{align*}
	since $\Lambda$ is an additive subgroup of $V_{\mathbb{R}}$, and $\lambda^{\prime} = w- \lambda$ ranges over $\Lambda$ as $w$ does.
	
	(ii) Recall that
	\begin{align*}
		\phi_{\mathbb R}\bigl(Q_{\mathbb R}(z)\bigr)=
	q_T(z)
		=
		\|z\|_T^2
	\end{align*} 
	for every \(z\in V_{\mathbb R}\). Fix $z_{0} \in V_{\mathbb{R}}$ and $\varepsilon > 0$. By definition of the infimum, there exists $\lambda_0 \in \Lambda$ such that
	\begin{align}\label{eq:inf-approx}
		\phi_{\mathbb{R}}\bigl(Q_{\mathbb{R}}(z_{0}-\lambda_{0})\bigr) < \Phi_{\mathbb{R}}(z_{0}) + \frac{\varepsilon}{2}.
	\end{align}
	Since both $Q_{\mathbb{R}}$ and $\phi_{\mathbb{R}}$ are continuous, the map $
		z\mapsto
		\phi_{\mathbb{R}}\left(Q_{\mathbb{R}}(z-\lambda_{0})\right)$
	is continuous. Hence there exists $\delta>0$ such that, for all
	$z\in V_{\mathbb{R}}$ with $\|z-z_{0}\|_T<\delta$,
	\begin{align}\label{eq:cont-est}
		\phi_{\mathbb{R}}\left(Q_{\mathbb{R}}(z-\lambda_{0})\right) < \phi_{\mathbb{R}}\left(Q_{\mathbb{R}}(z_{0}-\lambda_{0})\right) + \frac{\varepsilon}{2}.
	\end{align}
	Combining \eqref{eq:inf-approx} with \eqref{eq:cont-est}, we obtain
	\begin{align*}
		\Phi_{\mathbb{R}}(z) \le \phi_{\mathbb{R}}\left(Q_{\mathbb{R}}(z-\lambda_{0})\right) < \Phi_{\mathbb{R}}(z_{0})+\varepsilon,
	\end{align*} 
	so $\Phi_{\mathbb{R}}$ is upper semicontinuous at $z_{0}$.
		Next, we prove lower semicontinuity. If
	$\varepsilon\geq\Phi_{\mathbb{R}}(z_{0})$, then $
		\Phi_{\mathbb{R}}(z)
		\geq
		0
		\geq
		\Phi_{\mathbb{R}}(z_{0})-\varepsilon$,
	so there is nothing to prove. Thus we may assume that $
		0<\varepsilon<\Phi_{\mathbb{R}}(z_{0})$. 
	Choose $r>1+\|z_0\|_T$ such that
	\begin{align*}
		(r-1)^2
		\geq
		\Phi_{\mathbb{R}}(z_{0})-\varepsilon.
	\end{align*}
	If $\|z_{0}-\lambda\|_T\geq r$ and $\|z-z_{0}\|_T<1$, then
	\begin{align*}
		\|z-\lambda\|_T
		\geq
		\|z_{0}-\lambda\|_T-\|z-z_{0}\|_T
		>
		r-1,
	\end{align*}
	and hence
	\begin{align*}
		\phi_{\mathbb{R}}
		\left(Q_{\mathbb{R}}(z-\lambda)\right)
		=
		q_T(z-\lambda)
		=
		\|z-\lambda\|_T^2
		>
		(r-1)^2
		\geq
		\Phi_{\mathbb{R}}(z_{0})-\varepsilon.
	\end{align*}
	Since $\Lambda$ is discrete, there are only finitely many $\lambda\in\Lambda$ with $\|z_{0}-\lambda\|_{T}<r$, say $\lambda_{1},\ldots,\lambda_{k} \in \Lambda$. For each $\lambda_{i}$, the function $z\mapsto\phi_{\mathbb{R}}(Q_{\mathbb{R}}(z-\lambda_{i}))$ is continuous. So there exists $\delta_{i}>0$ such that whenever $\|z-z_{0}\|_{T}<\delta_{i}$, 
	\begin{align*}
		\phi_{\mathbb{R}}(Q_{\mathbb{R}}(z-\lambda_{i})) > \phi_{\mathbb{R}}(Q_{\mathbb{R}}(z_{0}-\lambda_{i})) - \varepsilon \ge \Phi_{\mathbb{R}}(z_{0}) - \varepsilon.
	\end{align*} 
	Set $\delta=\min\{1,\delta_{1},\ldots,\delta_{k}\}>0$. Then, whenever $\|z-z_{0}\|_{T}<\delta$,
	\begin{align*}
		\phi_{\mathbb{R}}(Q_{\mathbb{R}}(z-\lambda))\ge \Phi_{\mathbb{R}}(z_{0}) - \varepsilon 
	\end{align*}
	for all  $\lambda \in \Lambda$. Hence $\Phi_{\mathbb{R}}(z) \ge \Phi_{\mathbb{R}}(z_{0}) - \varepsilon$.
	
	By the above discussion, we see that $\Phi_{\mathbb{R}}$ is continuous on $V_{\mathbb{R}}$.
	
	(iii) Since $\Lambda$ is a full rank lattice in $V_{\mathbb{R}}$, the quotient space $V_{\mathbb{R}}/\Lambda$ is compact. Define
	\begin{align*}
		 \widetilde{\Phi}_{\mathbb{R}}:V_{\mathbb{R}}/\Lambda \to \mathbb{R}_{\ge 0}, \quad \widetilde{\Phi}_{\mathbb{R}}(\bar{z}) = \Phi_{\mathbb{R}}(z),
	\end{align*}
	which is well-defined by periodicity (property (i)). By (ii), $\Phi_{\mathbb{R}}$ is continuous. Since $p\colon V_{\mathbb{R}}\to V_{\mathbb{R}}/\Lambda$ is a quotient map and  $
	\Phi_{\mathbb{R}}
	=
	\widetilde{\Phi}_{\mathbb{R}}\circ p$, it follows that $\widetilde{\Phi}_{\mathbb{R}}$ is continuous. Hence, by the extreme value theorem, there exists some $\bar{z}_{0}\in V_{\mathbb{R}}/\Lambda $ such that
	\begin{align*}
		\widetilde{\Phi}_{\mathbb{R}}(\bar{z}_{0}) = \max_{\bar{z} \in V_{\mathbb{R}}/\Lambda } \widetilde{\Phi}_{\mathbb{R}}(\bar{z}) 
	\end{align*}
	Choose $z_{0}\in V_{\mathbb{R}}$ such that $z_{0}+\Lambda=\bar{z}_{0}$. Then, for any $z\in V_{\mathbb{R}}$,
	\begin{align*}
		\Phi_{\mathbb{R}}(z)=	\widetilde{\Phi}_{\mathbb{R}}(\bar{z})\le  	\widetilde{\Phi}_{\mathbb{R}}(\bar{z}_{0})=\Phi_{\mathbb{R}}(z_{0}).
	\end{align*}
	Clearly, $\Phi_{\mathbb{R}}(z_{0})\le \sup_{z \in V_{\mathbb{R}}} \Phi_{\mathbb{R}}(z) $ and hence $\sup_{z \in V_{\mathbb{R}}} \Phi_{\mathbb{R}}(z)=\Phi_{\mathbb{R}}(z_{0})$.
	
	(iv) For $x\in V$ and $y\in L$, from the definition of
	$\phi_{\mathbb{R}}$ and \eqref{phiQ}, we have
	\begin{align*}
		\phi(Q(x-y))
		&=\phi_{\mathbb{R}}(\sigma(Q(x-y)))=\phi_{\mathbb{R}}
		\bigl(Q_{\mathbb{R}}(\sigma(x)-\sigma(y))\bigr).
	\end{align*}
	Taking the infimum over $y\in L$, and noting that
	$\sigma(L)=\Lambda$, we obtain
	\begin{align}\label{eq:Phi-compatibility}
		\Phi(x)
		&=\inf_{y\in L}
		\phi_{\mathbb{R}}
		\bigl(Q_{\mathbb{R}}(\sigma(x)-\sigma(y))\bigr)=\inf_{\lambda\in\Lambda}
		\phi_{\mathbb{R}}
		\bigl(Q_{\mathbb{R}}(\sigma(x)-\lambda)\bigr)
		=\Phi_{\mathbb{R}}(\sigma(x)).
	\end{align}
	Therefore,
	\begin{align*}
		\sup_{x\in V}\Phi(x)=\sup_{z\in \sigma(V)}\Phi_{\mathbb{R}}(z)
		\leq
		\sup_{z\in V_{\mathbb{R}}}\Phi_{\mathbb{R}}(z).
	\end{align*}
	Conversely, let $z_{0}\in V_{\mathbb{R}}$. Since $\sigma(V)$ is dense in
	$V_{\mathbb{R}}$, there exists a sequence $\{v_n\}\subset V$ such that
	$\lim_{n\to\infty}\sigma(v_n)=z_{0}$.
	By (ii) and \eqref{eq:Phi-compatibility},
	\begin{align*}
		\Phi_{\mathbb{R}}(z_{0})=\lim_{n\to\infty}\Phi_{\mathbb{R}}(\sigma(v_n))=\lim_{n\to\infty}\Phi(v_n)
		\leq\sup_{x\in V}\Phi(x).
	\end{align*}
	Since $z_{0}$ is arbitrary,
	\begin{align*}
		\sup_{z\in V_{\mathbb{R}}}\Phi_{\mathbb{R}}(z)
		\leq\sup_{x\in V}\Phi(x).
	\end{align*}
	Combining the two inequalities, we are done.
	
	(v) By Proposition \ref{prop:trace-coordinates}(ii) and (iii), $q$ is a positive definite integral quadratic form and
	\begin{align*}
		\Phi_{\mathbb{R}}(\varphi(u))
		=\inf_{v\in\mathbb{Z}^{nd}}q(u-v).
	\end{align*}
	
	Hence, by \eqref{Riso} and
	\cite[Proposition 4.1(a)]{clark_ADC-II-2014}, there exists
	$u_{0}\in\mathbb{Q}^{nd}$ such that
	\begin{align}\label{maxphi}
		\max_{z\in V_{\mathbb{R}}}\Phi_{\mathbb{R}}(z)
		&=\max_{u\in\mathbb{R}^{nd}}
		\Phi_{\mathbb{R}}(\varphi(u))
		=\Phi_{\mathbb{R}}(\varphi(u_{0})).
	\end{align}
	Since $\varphi(\mathbb{Q}^{nd})=\sigma(V)$ by Proposition \ref{prop:trace-coordinates}(i), we may write
	$\varphi(u_{0})=\sigma(x_{0})$ for some $x_{0}\in V$.
	By \eqref{eq:Phi-compatibility}, for every $x\in V$, we have
	\begin{align}\label{phix-x0}
		\Phi(x)=\Phi_{\mathbb{R}}(\sigma(x))
		\leq\Phi_{\mathbb{R}}(\varphi(u_{0}))=\Phi_{\mathbb{R}}(\sigma(x_{0}))=\Phi(x_{0}).
	\end{align}
 	Combining \eqref{phix-x0} with \eqref{maxphi}, we deduce that
	\begin{align*}
		\max_{x\in V}\Phi(x)=\Phi(x_{0})=\Phi_{\mathbb{R}}(\varphi(u_{0})) =\max_{z\in V_{\mathbb{R}}}\Phi_{\mathbb{R}}(z).
	\end{align*}
\end{proof}
\begin{lem}\label{lem:phi-Euclidean-trace}
	Let $L$ be an $\mathcal{O}_F$-lattice of rank $n$ and put $d=[F:\mathbb{Q}]$.
	Then $L$ is trace Euclidean if and only if $\rho_T(L)<d$.
\end{lem}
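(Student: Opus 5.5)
The plan is to translate the trace Euclidean condition into a statement about $\Phi$ and then apply Lemma \ref{lem:Phi}. The key subtlety is that trace Euclidean asks for a \emph{strict} inequality $\tr_{F/\mathbb{Q}}(Q(x-y))<d$ for every $x\in V$. A supremum condition $\sup\Phi<d$ is not automatically equivalent to this: a supremum could equal $d$ without being attained, or an infimum over $y$ could equal something below $d$ without being attained. Both attainment issues must be handled, and they will be handled by Lemma \ref{lem:Phi}(v) and by integrality respectively.

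Step one: for fixed $x\in V$, the infimum $\Phi(x)=\inf_{y\in L}\tr_{F/\mathbb{Q}}(Q(x-y))$ is attained. By \eqref{eq:Phi-compatibility}, $\Phi(x)=\Phi_{\mathbb{R}}(\sigma(x))=\inf_{\lambda\in\Lambda}\|\sigma(x)-\lambda\|_T^2$. Since $\Lambda$ is discrete in $V_{\mathbb{R}}$ and $\|\cdot\|_T$ is a positive definite norm, only finitely many $\lambda$ satisfy $\|\sigma(x)-\lambda\|_T^2\le \Phi(x)+1$, so the infimum is a minimum over a finite set. (Alternatively, use Proposition \ref{prop:trace-coordinates}(iii): $q(u-v)$ with $u\in\mathbb{Q}^{nd}$ takes values in a discrete subset $\frac1N\mathbb{Z}_{\ge0}$.) Hence $L$ is trace Euclidean if and only if $\Phi(x)<d$ for every $x\in V$.

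Step two: by Lemma \ref{lem:Phi}(v), $\sup_{x\in V}\Phi(x)$ is a maximum, attained at some $x_0\in V$. By (iv) and (v) of the same lemma, together with \eqref{rhoTL=supinf}, this maximum equals $\rho_T(L)$. So $\Phi(x)<d$ for all $x\in V$ if and only if $\max_{x\in V}\Phi(x)<d$, which holds if and only if $\rho_T(L)<d$. For ($\Leftarrow$), if $\rho_T(L)<d$ then every $\Phi(x)\le\rho_T(L)<d$, and step one gives $y$ with $\tr_{F/\mathbb{Q}}(Q(x-y))<d$. For ($\Rightarrow$), trace Euclidean gives $\Phi(x_0)<d$, and $\rho_T(L)=\Phi(x_0)$.

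The only real content is attainment, and both halves are supplied: Lemma \ref{lem:Phi}(v) handles the supremum, and the discreteness argument in step one handles the infimum. I expect no serious obstacle beyond stating step one carefully.
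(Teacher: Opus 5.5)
Your proof is correct and follows essentially the same route as the paper: both use Lemma \ref{lem:Phi}(v) to write $\rho_T(L)=\Phi(x_0)$ for some $x_0\in V$, which gives ($\Rightarrow$) at once and gives ($\Leftarrow$) via $\Phi(x)\le\rho_T(L)<d$. Your step one is harmless but unnecessary, because an infimum strictly below $d$ already yields some $y\in L$ with $\tr_{F/\mathbb{Q}}(Q(x-y))<d$ by the definition of infimum, so no attainment of the infimum is needed; the paper argues exactly this way.
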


\begin{proof}
	By Lemma \ref{lem:Phi}\textup{(v)}, there exists $x_{0}\in V$ such that
	\begin{align*}
		\rho_T(L)=\max_{x\in V}\Phi(x)=\Phi(x_{0}).
	\end{align*}
	If $L$ is trace Euclidean, then there exists $y_{0}\in L$ such that
	\begin{align*}
		\rho_T(L)=\Phi(x_{0})
		\leq\tr_{F/\mathbb{Q}}\bigl(Q(x_{0}-y_{0})\bigr)<d.
	\end{align*}
	Conversely, suppose that $\rho_T(L)<d$. For every $x\in V$,
	\begin{align*}
		\inf_{y\in L}\tr_{F/\mathbb{Q}}\bigl(Q(x-y)\bigr)
		=\Phi(x)\leq\rho_T(L)<d.
	\end{align*}
	By the definition of the infimum, there exists $y\in L$ such that
	$\tr_{F/\mathbb{Q}}(Q(x-y))<d$. Thus $L$ is trace Euclidean.
\end{proof}

\section{Covolumes and the trace covering radius of lattices}
\label{sec:volume-trace}

In this section, we retain the notation of
Section \ref{sec:preliminaries}. Let $L$ be a positive definite integral
$\mathcal{O}_{F}$-lattice of rank $n\ge 1$ on $V$, and put $ 	\Lambda=\sigma_{V}(L)\subseteq V_{\mathbb{R}}\cong \mathbb{R}^{nd}$. Let $z_{1},\ldots,z_{nd}$ be a $\mathbb{Z}$-basis of
$\Lambda$, and let $
	G_{\Lambda}
	= (B_{T}(z_{i},z_{j}))_
	{1\leq i,j\leq nd}$
be the Gram matrix of $\Lambda$ with respect to $B_{T}$.

Let $\vol$ be the Riemannian volume measure on
$V_{\mathbb{R}}$ induced by $B_{T}$, normalized so that the
unit cube spanned by any orthonormal basis
(with respect to $B_{T}$) has volume $1$. The set
\begin{align*}
	\mathcal{F}
	=
	\left\{
	\sum_{i=1}^{nd}c_{i}z_{i}\mid
	0\leq c_{i}<1
	\right\}
\end{align*}
is called a \textit{fundamental domain} or \textit{fundamental mesh} for $\Lambda$. From
\cite[Chapter~I, \S4, Definition~(4.1) and p.~26]
{neukirch_algebraic_1999}, we have $\vol(\mathcal{F})=\sqrt{\det G_{\Lambda}}$, and write
\begin{align}\label{vol-defn}
	\vol(\Lambda)
	\coloneq
	\vol(\mathcal{F})
	=
	\sqrt{\det G_{\Lambda}},
\end{align}
which is also called the \textit{covolume} of $\Lambda$; see \cite[p.\hskip 0.1cm 392]{hoffstein_introduction_2014}. Following \cite[Definition~2.2]{cesaroni-novaga_minimal_2024}, we define
the \textit{Voronoi cell} of $\Lambda$ with respect to $\|\cdot\|_{T}$ by
\begin{align*}
	\mathcal{V}
	=
	\left\{
	z\in V_{\mathbb{R}}:
	\|z\|_{T}\leq \|z-\lambda\|_{T}
	\text{ for every }\lambda\in\Lambda
	\right\}.
\end{align*}
By
\cite[Remark 2.8]{cesaroni-novaga_minimal_2024},
$\mathcal{V}$ is a fundamental domain for $\Lambda$ up to its
boundary. Hence
\begin{align}\label{eq:voronoi-covolume}
	\vol(\mathcal{V})
	=
	\vol(\mathcal{F})
	=
	\vol(\Lambda).
\end{align}

We now compute the covolume of $\Lambda$ in terms of the field discriminant of $F$ and the volume ideal of $L$.
\begin{lem}\label{lem:gram-det}
	Let $L$ be an $\mathcal{O}_{F}$-lattice of rank $n$ and $G_{\Lambda}$ be the Gram matrix of $\Lambda=\sigma_{V}(L)$ with respect to the trace inner product $B_{T}$. Then
	\begin{align*}
		\det G_{\Lambda}
		=
		\Delta_{F}^{n}\nr(\mathfrak{v}(L)).
	\end{align*}
\end{lem}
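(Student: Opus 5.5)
The plan is to reduce to the case where $L$ has a pseudo-basis and then compute the Gram determinant as a product of two factors: a discriminant of $\mathcal{O}_F$-modules and a determinant coming from the form. Since $\det G_\Lambda$ does not depend on the choice of $\mathbb{Z}$-basis of $\Lambda$ (a change of basis multiplies it by $(\pm1)^2$), we may choose any convenient basis. By the structure theorem for finitely generated torsion-free $\mathcal{O}_F$-modules, $L=\mathfrak{a}_1e_1\oplus\cdots\oplus\mathfrak{a}_ne_n$ with $e_1,\ldots,e_n$ an $F$-basis of $V$ and $\mathfrak{a}_j$ fractional ideals. Then $\mathfrak{v}(L)=\mathfrak{a}_1^2\cdots\mathfrak{a}_n^2\det(B(e_j,e_k))$. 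Write $M=(B(e_j,e_k))_{j,k}$, so $\nr(\mathfrak{v}(L))=\nr(\mathfrak{a}_1\cdots\mathfrak{a}_n)^2\,|\nr_{F/\mathbb{Q}}(\det M)|$.

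Next, choose $\mathbb{Z}$-bases $\{\omega_{j1},\ldots,\omega_{jd}\}$ of each $\mathfrak{a}_j$, giving the $\mathbb{Z}$-basis $\{\omega_{jk}e_j\}$ of $L$. We rewrite the trace pairing in terms of the real embeddings. By \eqref{eq:trace-form-compatibility},
\[
B_T(\sigma(\omega_{jk}e_j),\sigma(\omega_{lm}e_l))=\sum_{i=1}^d\sigma_i(\omega_{jk})\sigma_i(\omega_{lm})\sigma_i(B(e_j,e_l)).
\]
Let $A_j=(\sigma_i(\omega_{jk}))_{k,i}$ be the $d\times d$ embedding matrix of $\mathfrak{a}_j$, and let $A=\diag(A_1,\ldots,A_n)$, an $nd\times nd$ matrix. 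Let $\widetilde M$ be the $nd\times nd$ block matrix whose $(j,l)$ block is $\diag(\sigma_1(B(e_j,e_l)),\ldots,\sigma_d(B(e_j,e_l)))$. The displayed identity then says exactly that $G_\Lambda=A\widetilde M A^{\mathsf T}$.

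It remains to evaluate each factor.
\begin{enumerate}[label=\rm(\alph*), itemindent=0em, leftmargin=*]
\item After a permutation of rows and columns applied simultaneously, which does not change the determinant, $\widetilde M$ becomes $\diag(\sigma_1(M),\ldots,\sigma_d(M))$. Hence $\det\widetilde M=\prod_i\sigma_i(\det M)=\nr_{F/\mathbb{Q}}(\det M)$, which is positive since $L$ is positive definite.
\item By the standard formula $(\det A_j)^2=\Delta_F\,\nr(\mathfrak{a}_j)^2$ (e.g.\ Neukirch, Ch.~I, (2.12)/(4.4), extended to fractional ideals by scaling with some $c\in F^\times$), we get $(\det A)^2=\Delta_F^n\prod_j\nr(\mathfrak{a}_j)^2$.
\end{enumerate}
Multiplying (a) and (b) gives $\det G_\Lambda=\Delta_F^n\,\nr(\mathfrak{a}_1\cdots\mathfrak{a}_n)^2\,\nr_{F/\mathbb{Q}}(\det M)$, which equals $\Delta_F^n\nr(\mathfrak{v}(L))$ by the formula for $\nr(\mathfrak{v}(L))$ above. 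Here we also use multiplicativity of $\nr$ on fractional ideals.

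The main obstacle is bookkeeping rather than any conceptual difficulty. One has to set up the block and permutation structure of $\widetilde M$ cleanly. One also has to justify the discriminant formula for fractional ideals: scaling $\mathfrak{a}$ by $c$ multiplies $\det A_j$ by $\nr_{F/\mathbb{Q}}(c)$, consistent with the definition of $\nr(\mathfrak{b})$ given in the introduction. Finally, one must check the sign: both sides are positive, because $G_\Lambda$ is positive definite.
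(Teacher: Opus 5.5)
Your proposal is correct and follows essentially the same route as the paper: a pseudobasis $L=\mathfrak{a}_1e_1\oplus\cdots\oplus\mathfrak{a}_ne_n$, a factorization of $G_\Lambda$ as (block-diagonal embedding matrix) $\times$ (block matrix of diagonal embedded Gram entries) $\times$ (its transpose), the scaled discriminant formula $(\det A_j)^2=\Delta_F\nr(\mathfrak{a}_j)^2$, and the identity $\nr(\mathfrak{v}(L))=\prod_j\nr(\mathfrak{a}_j)^2\,\nr_{F/\mathbb{Q}}(\det M)$. The only difference is cosmetic: you evaluate $\det\widetilde M$ by a simultaneous row and column permutation to $\diag(\sigma_1(M),\ldots,\sigma_d(M))$, which is a cleaner substitute for the paper's iterated Laplace expansion along the rows $I_k$.
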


\begin{proof}
	By \cite[Theorem 9.3.6]{voight_quaternion_2021}, there exists a pseudobasis
	\begin{align*}
		L
		=
		\mathfrak{a}_{1}x_{1}
		\oplus\cdots\oplus
		\mathfrak{a}_{n}x_{n},
	\end{align*}
	where $\{x_{1},\ldots,x_{n}\}$ is a basis of $V$ over $F$ and each $\mathfrak{a}_{i}$ is a nonzero fractional ideal of $F$. For each $i$, choose a $\mathbb{Z}$-basis
	$\{\alpha_{i\ell}\}_{1\le\ell\le d}$ of $\mathfrak{a}_{i}$. Then $\Lambda$ has the $\mathbb{Z}$-basis
	\begin{align*}
		 \{
		\sigma_{V}(\alpha_{i\ell}x_{i})
		\mid
		1\le i\le n,\ 1\le\ell\le d \},
	\end{align*}
	ordered first by $i$ and then by $\ell$. Since a change of $\mathbb{Z}$-basis preserves the Gram determinant,
	we may take this as the basis defining $G_{\Lambda}$. Put $
		G_{L}
		=
		(b_{ij})_{1\le i,j\le n}$ and $
		b_{ij}
		=
		B(x_{i},x_{j})$, where $B$ denotes the bilinear form of $L$. For $\alpha,\beta\in F$, we have
	\begin{equation}\label{eq:inner-product-sigma}
		\begin{aligned}
			B_{T}\bigl(\sigma_{V}(\alpha x_{i}),
			\sigma_{V}(\beta x_{j})\bigr)
			&=
			\sum_{k=1}^{d}\sigma_{k}(\alpha b_{ij}\beta)=
			\sum_{k=1}^{d}
			\sigma_{k}(\alpha)\sigma_{k}(b_{ij})
			\sigma_{k}(\beta).
		\end{aligned}
	\end{equation}
	For $1\le i\le n$, define $
		\Omega_{i}
		=
		\left(\sigma_{k}(\alpha_{i\ell})\right)_
		{1\le k,\ell\le d}$,
	and, for $1\le i,j\le n$ and $1\le k\le d$, put
	\begin{align*}
		D_{ij}^{(k)}
		=
		\diag\left(
		\sigma_{k}(b_{ij}),
		\sigma_{k+1}(b_{ij}),
		\ldots,
		\sigma_{d}(b_{ij})
		\right),
		\qquad
		D_{ij}=D_{ij}^{(1)}.
	\end{align*}
    Thus $D_{ij}^{(k)}$ has order $d-k+1$. Regard $G=G_{\Lambda}$ as an $n\times n$ block matrix with blocks of order $d$. It follows from \eqref{eq:inner-product-sigma} that
	\begin{align*}
		G_{ij}
		=
		\Omega_{i}^{T}D_{ij}\Omega_{j}.
	\end{align*}
	Define $C=
	\diag(\Omega_{1},\ldots,\Omega_{n})$ and
	\begin{align*}
		A^{(k)}
		&=
		\begin{pmatrix}
			D_{11}^{(k)} & D_{12}^{(k)} & \cdots & D_{1n}^{(k)}\\
			D_{21}^{(k)} & D_{22}^{(k)} & \cdots & D_{2n}^{(k)}\\
			\vdots       & \vdots       &        & \vdots\\
			D_{n1}^{(k)} & D_{n2}^{(k)} & \cdots & D_{nn}^{(k)}
		\end{pmatrix}.
	\end{align*}
	Then $A^{(k)}$ has order $n(d-k+1)$ and $
		G
		=
		C^{T}A^{(1)}C$. So
	\begin{align}\label{detG}
		\det G
		=
		(\det C)^{2}\det A^{(1)}.
	\end{align}	
	
		For each $i$, choose $c_{i}\in F^{\times}$ such that
	$c_{i}\mathfrak{a}_{i}$ is integral. Applying
	\cite[Chapter~I, \S5, Proposition~(5.2)]{neukirch_algebraic_1999}
	to the $\mathbb{Z}$-basis
	$\{c_{i}\alpha_{i\ell}\}_{1\leq\ell\leq d}$ of
	$c_{i}\mathfrak{a}_{i}$, we obtain
	\begin{align*}
		\nr_{F/\mathbb{Q}}(c_{i})^{2}
		(\det\Omega_{i})^{2}
		&=
		\det\left(
		\bigl(\sigma_{k}(c_{i}\alpha_{i\ell})\bigr)_
		{1\leq k,\ell\leq d}
		\right)^{2}=
		\Delta_{F}\nr(c_{i}\mathfrak{a}_{i})^{2}=
		\Delta_{F}
		|\nr_{F/\mathbb{Q}}(c_{i})|^{2}
		\nr(\mathfrak{a}_{i})^{2}.
	\end{align*}
	Since $c_{i}\neq0$, it follows that $
		(\det\Omega_{i})^{2}
		=
		\Delta_{F}\nr(\mathfrak{a}_{i})^{2}$. 
	Since $C$ is block diagonal,
	\begin{align}\label{detC}
		(\det C)^{2}
		&=
		\prod_{i=1}^{n}(\det\Omega_{i})^{2}=
		\Delta_{F}^{n}
		\prod_{i=1}^{n}\nr(\mathfrak{a}_{i})^{2}.
	\end{align}
	
	We next compute $\det A^{(1)}$. For $1\le k\le d-1$, put $
		m_{k}=
		d-k+1$ and $
		I_{k} =
		\{1,m_{k}+1,\ldots,(n-1)m_{k}+1\}$. For $1\le i,j\le n$ and $1\le r\le m_{k}$, the diagonal matrix $D_{ij}^{(k)}$ gives
	\begin{align*}
		\bigl(A^{(k)}\bigr)_
		{(i-1)m_{k}+1,\,(j-1)m_{k}+r}
		&=
		\begin{cases}
			\sigma_{k}(b_{ij}) & \text{if } r=1,\\
			0                  & \text{if } 2\le r\le m_{k}.
		\end{cases}
	\end{align*}
	Expanding $\det A^{(k)}$ along the rows indexed by $I_{k}$, we see that the only nonzero term in the Laplace expansion is the one for which the column set equals $I_{k}$. The corresponding minor is
	\begin{align*}
		\det\left(
		\bigl(\sigma_{k}(b_{ij})\bigr)_{1\le i,j\le n}
		\right)
		=
		\det\sigma_{k}(G_{L}),
	\end{align*}
	and its complementary minor is $\det A^{(k+1)}$. The sign in Laplace's expansion is 
	\begin{align*}
		(-1)^{
			\sum_{s\in I_{k}}s+
			\sum_{s\in I_{k}}s
		}
		=
		1,
	\end{align*}
	so no sign factor appears. Hence $
		\det A^{(k)}
		=
		\det\sigma_{k}(G_{L})
		\det A^{(k+1)}$ for $1\le k\le d-1$. 
	Since $A^{(d)}=\sigma_{d}(G_{L})$, we obtain
	\begin{align}\label{detA1}
		\det A^{(1)}
		&=
		\prod_{k=1}^{d}\det\sigma_{k}(G_{L})=
		\prod_{k=1}^{d}\sigma_{k}(\det G_{L})=
		\nr_{F/\mathbb{Q}}(\det G_{L}).
	\end{align}
	By the definition of the volume ideal (cf. \cite[\S 82E]{omeara_quadratic_1963}),
	\begin{align*}
		\mathfrak{v}(L)
		=
		\left(
		\prod_{i=1}^{n}\mathfrak{a}_{i}
		\right)^{2}
		(\det G_{L}).
	\end{align*}
	Since $L$ is positive definite, $\sigma_{k}(G_{L})$ is positive definite for each $k$. Therefore,
	\begin{align*}
		\nr_{F/\mathbb{Q}}(\det G_{L})
		=
		\prod_{k=1}^{d}\det\sigma_{k}(G_{L})
		>
		0,
	\end{align*}
	and hence
	\begin{align}\label{NvL}
		\nr(\mathfrak{v}(L))
		=
		\left(
		\prod_{i=1}^{n}\nr(\mathfrak{a}_{i})^{2}
		\right)
		\nr_{F/\mathbb{Q}}(\det G_{L}).
	\end{align}
	Combining \eqref{detG}, \eqref{detC}, \eqref{detA1}, and \eqref{NvL}, we obtain
	\begin{align*}
		\det G_{\Lambda}
		&=
		\Delta_{F}^{n}
		\left(
		\prod_{i=1}^{n}\nr(\mathfrak{a}_{i})^{2}
		\right)
		\nr_{F/\mathbb{Q}}(\det G_{L})=
		\Delta_{F}^{n}\nr(\mathfrak{v}(L)).
	\end{align*}
\end{proof}
\begin{lem}\label{lem:covolume-formula}
	Let $L$ be an $\mathcal{O}_F$-lattice of rank $n$ and $\Lambda=\sigma_V(L)$. Then
	\begin{align*}
		\vol( \Lambda) = \Delta_F^{n/2} \sqrt{\nr(\mathfrak{v}(L))}.
	\end{align*} 
\end{lem}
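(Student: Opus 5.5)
This follows directly from the definition \eqref{vol-defn} combined with Lemma \ref{lem:gram-det}. We take any $\mathbb{Z}$-basis $z_{1},\ldots,z_{nd}$ of $\Lambda=\sigma_{V}(L)$. By \eqref{vol-defn}, $\vol(\Lambda)=\sqrt{\det G_{\Lambda}}$, where $G_{\Lambda}$ is the Gram matrix with respect to $B_{T}$. This quantity does not depend on the chosen basis: two $\mathbb{Z}$-bases differ by some $U\in\mathrm{GL}_{nd}(\mathbb{Z})$, and then $G$ is replaced by $U^{\mathsf{T}}GU$, with $(\det U)^{2}=1$. This is the same invariance already used in the proof of Lemma \ref{lem:gram-det}.

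Next we substitute the value $\det G_{\Lambda}=\Delta_{F}^{n}\nr(\mathfrak{v}(L))$ from Lemma \ref{lem:gram-det}. Before taking square roots, we check that both factors are positive reals:
\begin{itemize}
\item Since $F$ is totally real, $\Delta_{F}>0$. Indeed, $\Delta_{F}=\det(\sigma_{k}(\omega_{\ell}))^{2}$ for an integral basis $\{\omega_\ell\}$, and this is the square of a real number, nonzero because the embeddings are distinct.
\item $\nr(\mathfrak{v}(L))>0$, as shown in \eqref{NvL}, because $\nr_{F/\mathbb{Q}}(\det G_{L})>0$ and the norms of fractional ideals are positive.
\end{itemize}
Alternatively, $\det G_{\Lambda}>0$ directly, since $B_{T}$ is positive definite.

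Taking the positive square root then gives
\[
\vol(\Lambda)=\sqrt{\Delta_{F}^{n}\nr(\mathfrak{v}(L))}=\Delta_{F}^{n/2}\sqrt{\nr(\mathfrak{v}(L))}.
\]

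There is no real obstacle here. The only points needing care are that the Gram determinant is independent of the basis and that $\Delta_F>0$, so that $\Delta_F^{n/2}$ is the positive real root. Both are routine.
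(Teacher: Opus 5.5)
Your proposal is correct and follows the paper's own one-line proof: combine \eqref{vol-defn} with Lemma \ref{lem:gram-det} and take the positive square root. Your extra checks, namely that the Gram determinant does not depend on the choice of basis and that $\Delta_F>0$ and $\nr(\mathfrak{v}(L))>0$, simply spell out points the paper leaves implicit.
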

\begin{proof}
	This follows immediately from \eqref{vol-defn} and Lemma \ref{lem:gram-det}.
\end{proof}
\begin{lem}\label{lem:covering-volume-inequality}
	Let $L$ be an $\mathcal{O}_F$-lattice of rank $n$ and $\Lambda=\sigma(L)$. Put $U_m=\pi^{m/2}/\Gamma(m/2+1)$, where $\Gamma$ denotes the Gamma function. Then
	\begin{align*}
		\vol(\Lambda) \le U_{nd} \bigl(\rho_T(L)\bigr)^{nd/2}.
	\end{align*}  
\end{lem}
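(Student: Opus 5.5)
The plan is to use the covering property directly. Write $r=\sqrt{\rho_T(L)}$ and let $B_T(z,r)=\{w\in V_{\mathbb{R}}:\|w-z\|_T\le r\}$ denote the closed $\|\cdot\|_T$-ball. By \eqref{rhoTL=supinf}, for every $z\in V_{\mathbb{R}}$ we have $\inf_{\lambda\in\Lambda}\|z-\lambda\|_T\le r$. Since $\Lambda$ is discrete, the infimum is attained: only finitely many $\lambda$ lie in any bounded $\|\cdot\|_T$-ball. Hence every $z$ lies in some $B_T(\lambda,r)$ with $\lambda\in\Lambda$, that is,
\begin{align*}
V_{\mathbb{R}}=\bigcup_{\lambda\in\Lambda}\bigl(\lambda+B_T(0,r)\bigr).
\end{align*}

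Next I will intersect with a fundamental domain. Take the fundamental mesh $\mathcal{F}$, or equivalently the Voronoi cell $\mathcal{V}$, using \eqref{eq:voronoi-covolume}. Then
\begin{align*}
\vol(\Lambda)=\vol(\mathcal{F})\le\sum_{\lambda\in\Lambda}\vol\bigl(\mathcal{F}\cap(\lambda+B_T(0,r))\bigr)=\sum_{\lambda\in\Lambda}\vol\bigl((\mathcal{F}-\lambda)\cap B_T(0,r)\bigr).
\end{align*}
The translates $\mathcal{F}-\lambda$ for $\lambda\in\Lambda$ are pairwise disjoint and tile $V_{\mathbb{R}}$, so the last sum equals $\vol(B_T(0,r))$.

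Alternatively, with the Voronoi cell the argument is even more direct. For $z\in\mathcal{V}$ we have $\|z\|_T=\min_{\lambda}\|z-\lambda\|_T\le r$ by the definition of $\mathcal{V}$, so $\mathcal{V}\subseteq B_T(0,r)$, and therefore $\vol(\Lambda)=\vol(\mathcal{V})\le\vol(B_T(0,r))$. I would present this version because it is shortest.

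Finally, compute the ball volume. The form $B_T$ is positive definite on $V_{\mathbb{R}}$ of real dimension $nd$, since each $Q_i$ is positive definite. In a $B_T$-orthonormal basis, $\|\cdot\|_T$ becomes the standard Euclidean norm on $\mathbb{R}^{nd}$, and $\vol$ becomes Lebesgue measure by the chosen normalization. Thus
\begin{align*}
\vol(B_T(0,r))=U_{nd}\,r^{nd}=U_{nd}\,\rho_T(L)^{nd/2}.
\end{align*}

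There is no serious obstacle here. The only points needing care are that the infimum in the covering radius is attained, so that the balls genuinely cover $\mathcal{V}$, and the measure normalization. Both follow from discreteness of $\Lambda$ and from the definition of $\vol$ given earlier in this section.
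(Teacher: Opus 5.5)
Your proposal is correct, and the version you say you would present is the paper's argument: the paper shows $\mathcal{V}\subseteq B^{nd}(\sqrt{\rho_T(L)})$ from the definition of the Voronoi cell and \eqref{rhoTL=supinf}, then combines $\vol(\Lambda)=\vol(\mathcal{V})$ with the ball-volume formula $U_{nd}R^{nd}$. Your alternative tiling argument with the fundamental mesh $\mathcal{F}$ is also valid, and in the Voronoi version you do not need the infimum to be attained, since $0\in\Lambda$ already gives $\|z\|_T=\inf_{\lambda}\|z-\lambda\|_T$ for every $z\in\mathcal{V}$.
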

\begin{proof}
	For $r>0$, we denote by
	\begin{align*}
		B^{nd}(r) = \{z\in V_{\mathbb{R}} \mid \|z\|_{T} \le r \}
	\end{align*}
	the closed ball of radius $r$ with respect to the trace norm $\|\cdot\|_T$. The covering radius of $\Lambda$ in this norm is $R=\sqrt{\rho_T(L)}$. By the definition of the Voronoi cell and \eqref{rhoTL=supinf},
		\begin{align*}
		\|z\|_T
		&=
		\inf_{\lambda\in\Lambda}\|z-\lambda\|_{T}
		\le 
		\sqrt{\rho_T(L)}
		=
		R
	\end{align*}
for $z \in \mathcal{V}$. Hence $\mathcal{V} \subseteq B^{nd}(R)$, so by \eqref{eq:voronoi-covolume} and \cite[Theorem 7.30]{hoffstein_introduction_2014}, we deduce that
\begin{align*}
	\vol(\Lambda) = \vol(\mathcal{V}) \le \vol(B^{nd}(R)) = U_{nd}\, R^{nd} = U_{nd}\, \bigl(\rho_T(L)\bigr)^{nd/2}.
\end{align*} 
\end{proof}
\begin{lem}\label{lem:discriminant-norm-bound}
	Let $L$ be a trace Euclidean $\mathcal{O}_{F}$-lattice of rank $n$. Then
	\begin{align}\label{eq:disc-bound-step1}
		\nr(\mathfrak{v}(L))<
		\frac{(\pi d)^{nd}}{\Gamma(nd/2+1)^{2}\Delta_{F}^{n}}.
	\end{align}
	Moreover,
	\begin{align}\label{norm-inclusion}
		2^{-nd}\nr(\mathfrak{n}(L))^{n}
		\le\nr(\mathfrak{s}(L))^{n}
		\le\nr(\mathfrak{v}(L)).
	\end{align}
\end{lem}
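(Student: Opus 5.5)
The bound \eqref{eq:disc-bound-step1} comes from chaining the results of this section. By Lemma \ref{lem:phi-Euclidean-trace}, trace Euclidean means $\rho_T(L)<d$. Lemma \ref{lem:covering-volume-inequality} then gives
\begin{align*}
\vol(\Lambda)\le U_{nd}\,\rho_T(L)^{nd/2}<U_{nd}\,d^{nd/2}=\frac{(\pi d)^{nd/2}}{\Gamma(nd/2+1)}.
\end{align*}
The inequality is strict because $nd\ge1$, so $t\mapsto t^{nd/2}$ is strictly increasing. Lemma \ref{lem:covolume-formula} writes the left side as $\Delta_F^{n/2}\sqrt{\nr(\mathfrak{v}(L))}$. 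Squaring and dividing by $\Delta_F^n>0$ yields \eqref{eq:disc-bound-step1}.

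For \eqref{norm-inclusion}, I will use the pseudobasis $L=\mathfrak{a}_1x_1\oplus\cdots\oplus\mathfrak{a}_nx_n$ from the proof of Lemma \ref{lem:gram-det}.

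For the left inequality, recall that $2\mathfrak{s}(L)\subseteq\mathfrak{n}(L)$, since $2B(x,y)=Q(x+y)-Q(x)-Q(y)$. Taking norms gives $\nr(\mathfrak{n}(L))\le\nr(2\mathfrak{s}(L))=2^d\nr(\mathfrak{s}(L))$, because inclusion of fractional ideals reverses norms. Raising to the $n$-th power gives $2^{-nd}\nr(\mathfrak{n}(L))^n\le\nr(\mathfrak{s}(L))^n$.

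For the right inequality, the plan is to show $\mathfrak{v}(L)\subseteq\mathfrak{s}(L)^n$; taking norms then gives the claim. Write $\mathfrak{v}(L)=(\prod_i\mathfrak{a}_i)^2\det G_L$ and expand $\det G_L=\sum_{\pi}\pm\prod_i B(x_i,x_{\pi(i)})$ over permutations $\pi$. Each $\pi$ is a bijection, so $\prod_i\mathfrak{a}_i^2=\prod_i\mathfrak{a}_i\mathfrak{a}_{\pi(i)}$. Hence each term contributes $\prod_i\mathfrak{a}_i\mathfrak{a}_{\pi(i)}B(x_i,x_{\pi(i)})$. Since $\mathfrak{a}_i\mathfrak{a}_jB(x_i,x_j)=B(\mathfrak{a}_ix_i,\mathfrak{a}_jx_j)\subseteq\mathfrak{s}(L)$, each term lies in $\mathfrak{s}(L)^n$, and so does their sum. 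This is the standard fact $\mathfrak{v}(L)\subseteq\mathfrak{s}(L)^n$ (cf. O'Meara \S82E).

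The main point needing care is that each monomial of the determinant expansion lands in $\mathfrak{s}(L)^n$ after attaching the coefficient ideals. This is handled by the permutation regrouping above. Everything else is routine bookkeeping with norms of fractional ideals.
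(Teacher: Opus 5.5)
Your proposal is correct and follows the paper's proof. The first bound is obtained exactly as in the paper by chaining Lemmas \ref{lem:phi-Euclidean-trace}, \ref{lem:covering-volume-inequality}, and \ref{lem:covolume-formula}, while for \eqref{norm-inclusion} the paper cites O'Meara for $\mathfrak{v}(L)\subseteq\mathfrak{s}(L)^{n}\subseteq 2^{-n}\mathfrak{n}(L)^{n}$ and takes norms; your direct checks of $2\mathfrak{s}(L)\subseteq\mathfrak{n}(L)$ and of $\mathfrak{v}(L)\subseteq\mathfrak{s}(L)^{n}$ (via the permutation expansion of $\det G_L$) are valid and simply make those cited facts self-contained.
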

\begin{proof}
	By Lemma \ref{lem:phi-Euclidean-trace}, $\rho_{T}(L)< d$. Combining Lemmas \ref{lem:covolume-formula} and \ref{lem:covering-volume-inequality}, we obtain
	\begin{align*}
		\Delta_{F}^{n/2}\sqrt{\nr(\mathfrak{v}(L))}
		=\vol(\Lambda)
		\le U_{nd}\bigl(\rho_{T}(L)\bigr)^{nd/2}
		<\frac{(\pi d)^{nd/2}}{\Gamma(nd/2+1)}.
	\end{align*}
	Rearranging the inequality gives \eqref{eq:disc-bound-step1}.
	Recall from \cite[\S~82E and 82:10]{omeara_quadratic_1963} that
	\begin{align}\label{norm-scale-volume-inclusion}
		\mathfrak{v}(L)\subseteq\mathfrak{s}(L)^{n}
		\subseteq2^{-n}\mathfrak{n}(L)^{n}.
	\end{align}
	Taking absolute norms gives \eqref{norm-inclusion}.
\end{proof}

\section{Bounds on root discriminant and the norm of volume ideal}
\label{sec:root-discriminant-bounds}

Throughout this section, we assume that $F$ is a totally real number field of degree $d=[F:\mathbb{Q}]$, and denote by  $\delta_{F}=\Delta_{F}^{1/d}$ the root discriminant of $F$. Let $L$ be a positive definite integral $\mathcal{O}_{F}$-lattice.

\begin{lem}\label{lem:root-discriminant-bound}
	Suppose that $L$ is trace Euclidean.  Put 
	\begin{align*}
		c=\begin{cases}
			1 &\text{if $L$ is classic integral or $n=1$},\\
			2 &\text{otherwise}.
		\end{cases}
	\end{align*}
Then
	\begin{align*} 
		\delta_{F}\le\frac{c\pi d}{\Gamma(nd/2+1)^{2/(nd)}}
		<\frac{2c\pi e}{n}.
	\end{align*}
	In particular, $\delta_{F}<2\pi e$ in all cases.
\end{lem}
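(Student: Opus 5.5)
The plan is to combine the upper bound \eqref{eq:disc-bound-step1} of Lemma \ref{lem:discriminant-norm-bound} with a lower bound for $\nr(\mathfrak{v}(L))$ coming from integrality, and then to estimate the Gamma factor with an elementary Stirling-type inequality.

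\emph{Step 1: lower bound for the volume ideal.} If $L$ is classic integral, then $\mathfrak{s}(L)\subseteq\mathcal{O}_F$. Hence $\mathfrak{v}(L)\subseteq\mathfrak{s}(L)^n\subseteq\mathcal{O}_F$, so $\nr(\mathfrak{v}(L))\ge1$. If $n=1$, write $L=\mathfrak{a}x$. Then $\mathfrak{s}(L)=\mathfrak{a}^2Q(x)=\mathfrak{n}(L)\subseteq\mathcal{O}_F$, so $L$ is classic integral and again $\nr(\mathfrak{v}(L))\ge1$. In general, $\mathfrak{n}(L)\subseteq\mathcal{O}_F$ gives $\nr(\mathfrak{n}(L))\ge1$, and \eqref{norm-inclusion} then yields $\nr(\mathfrak{v}(L))\ge 2^{-nd}$. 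All cases are covered by the single bound $\nr(\mathfrak{v}(L))\ge c^{-nd}$.

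\emph{Step 2: the first inequality.} Insert this lower bound into \eqref{eq:disc-bound-step1}:
\begin{align*}
c^{-nd}<\frac{(\pi d)^{nd}}{\Gamma(nd/2+1)^{2}\Delta_F^{n}}.
\end{align*}
Solving for $\Delta_F^{n}$ and taking $nd$-th roots, using $\Delta_F^{n/(nd)}=\delta_F$, gives
\begin{align*}
\delta_F<\frac{c\pi d}{\Gamma(nd/2+1)^{2/(nd)}}.
\end{align*}
This is in fact strict, which is stronger than the claimed non-strict inequality.

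\emph{Step 3: the second inequality.} Put $m=nd/2$. I need $\Gamma(m+1)^{1/m}>m/e$ for every $m>0$ (half-integers included). Then $\Gamma(m+1)^{2/(nd)}>nd/(2e)$, and therefore
\begin{align*}
\frac{c\pi d}{\Gamma(nd/2+1)^{2/(nd)}}<\frac{c\pi d\cdot 2e}{nd}=\frac{2c\pi e}{n}.
\end{align*}
The inequality $\Gamma(m+1)>(m/e)^m$ is the step needing care, since $m$ need not be an integer. I would prove it from the integral representation:
\begin{align*}
\Gamma(m+1)=\int_0^\infty t^me^{-t}\,dt>\int_m^{\infty}t^me^{-t}\,dt\ge m^m\int_m^\infty e^{-t}\,dt=m^me^{-m}.
\end{align*}
The middle inequality is strict because the integrand is positive on $(0,m)$. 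Alternatively one can use the log-convexity form of Stirling's bound.

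\emph{Step 4: the final claim.} Since $n\ge1$, we get $\delta_F<2c\pi e/n\le 2c\pi e$. This immediately gives $\delta_F<2\pi e$ when $c=1$. If $c=2$, then $n\ge2$ by the definition of $c$, so $2c\pi e/n=4\pi e/n\le 2\pi e$. Hence $\delta_F<2\pi e$ in all cases.
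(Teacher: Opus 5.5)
Your proposal is correct and follows essentially the same route as the paper. You bound $\nr(\mathfrak{v}(L))\ge c^{-nd}$ via \eqref{norm-inclusion} (using that rank-one integral lattices are classic integral), insert this into \eqref{eq:disc-bound-step1}, and finish with $\Gamma(x+1)>(x/e)^x$ and the case split on $c$. The only difference is that you prove the Gamma bound directly from the integral representation, while the paper cites Batir's sharper Stirling-type inequality; either suffices, and your observation that the first inequality is actually strict is also correct.
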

\begin{proof}
	For the first assertion, note that every integral lattice of rank one is classic integral. So if $L$ is classic integral or $n=1$, then $ \mathfrak{s}(L)\subseteq \mathcal{O}_{F}$ and thus $\nr(\mathfrak{s}(L))\ge 1$; if $L$ is integral and $n\ge 2$, then $ \mathfrak{n}(L)\subseteq \mathcal{O}_{F}$ and thus $\nr(\mathfrak{n}(L))\ge 1$. By \eqref{norm-inclusion}, $\nr(\mathfrak{v}(L))\ge c^{-nd}$. Combining this with \eqref{eq:disc-bound-step1}, we have
	\begin{align*}
		\delta_{F}c^{-1}\le \Delta_{F}^{1/d}\,\nr(\mathfrak{v}(L))^{1/(nd)}
		\le\frac{\pi d}{\Gamma(nd/2+1)^{2/(nd)}}.
	\end{align*}
	This proves the first inequality.
	By \cite[Theorem~1.5]{batir_gammafunction_2008}, we have
	\begin{align*} 
		\Gamma(x+1)
		&\ge\sqrt{2e}\left(\frac{x+1/2}{e}\right)^{x+1/2}=\sqrt{2x+1}\left(\frac{x+1/2}{e}\right)^{x}
		>\left(\frac{x}{e}\right)^{x}
		\qquad(x>0).
	\end{align*}
	Taking $x=nd/2$ gives $\Gamma(nd/2+1)^{2/(nd)}>nd/(2e)$. This shows the second inequality.
	
	For the second assertion, if $n=1$, then $c=1$; if $n\ge 2$, then $2c/n\le 2$. So $\delta_{F}<2\pi e$.
\end{proof}
For $1\le d\le9$, we denote by $\Delta_{d}^{\min}$ the minimal discriminant
among totally real number fields of degree $d$.
From \cite[Table 3]{voight_totally-real_2008} and the fact that $\Delta_{1}^{\min}=1$, we have
\begin{align*}
	(\Delta_{1}^{\min},\ldots,\Delta_{9}^{\min})
	=(1,5,49,725,14641,300125,20134393,282300416,9685993193).
\end{align*}
\begin{lem}\label{lem:discriminant-lower-bounds}
	Put
	\begin{align}\label{eq:beta-discriminant}
		\beta_{d}\coloneq
		\begin{cases}
			(\Delta_{d}^{\min})^{1/d} &1\le d\le 9,\\
			14 &d=10,\\
			14.083 &d=11,\\
			36.347e^{-10.667/d} &d\ge 12.
		\end{cases}
	\end{align}
	Then $\delta_{F}\ge\beta_{d}$ for $d\ge1$. Moreover, if $\delta_{F}<2\pi e$, then $d\le14$.
\end{lem}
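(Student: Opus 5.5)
The lower bound $\delta_F\ge\beta_d$ will be proved by splitting into three ranges of $d$. The second assertion then follows by checking that $\beta_d\ge 2\pi e$ for every $d\ge 15$.

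For $1\le d\le 9$ the bound is immediate. By definition $\Delta_F\ge\Delta_d^{\min}$, so $\delta_F=\Delta_F^{1/d}\ge(\Delta_d^{\min})^{1/d}=\beta_d$, using the values from Voight's table quoted above.

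For $d\ge 10$ I will invoke the unconditional Odlyzko discriminant bounds for totally real fields.
\begin{itemize}
\item For $d=10$ and $d=11$, I read off the tabulated values of Odlyzko's unconditional lower bounds for the root discriminant of totally real fields of these degrees. These are at least $14$ and $14.083$ respectively, giving $\delta_F\ge\beta_d$ directly.
\item For $d\ge 12$, I use the asymptotic form of the unconditional Odlyzko bound: $\delta_F\ge A\,e^{-E/d}$ for totally real $F$, with $A=4\pi e^{1+\gamma}\approx 60.84$ replaced by a slightly weaker explicit constant. Concretely, I will cite the explicit version $\delta_F>36.347\,e^{-10.667/d}$, which is the form that appears in Odlyzko's tables and in the survey literature; it is valid for all $d$, in particular $d\ge12$.
\end{itemize}
Matching the exact constants to the cited source is the main obstacle. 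The argument itself is only a citation, but one must confirm that the quoted constants $36.347$ and $10.667$ are indeed unconditional and hold for every totally real field, not merely asymptotically. If only a table at discrete $d$ is available, I would combine the table for $12\le d\le 14$ with the analytic bound for larger $d$. Either way, the estimate is monotone in $d$, which keeps the case split clean.

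For the final claim, note that $2\pi e\approx 17.079$. The function $d\mapsto 36.347\,e^{-10.667/d}$ is increasing in $d$. It remains to find the first $d$ at which it exceeds $2\pi e$:
\begin{itemize}
\item the inequality $36.347\,e^{-10.667/d}\ge 17.079$ holds exactly when $d\ge 10.667/\log(36.347/17.079)$;
\item since $\log(36.347/17.079)\approx\log 2.128\approx 0.755$, this threshold is about $14.13$;
\item hence $\beta_d>2\pi e$ for all $d\ge15$, and a direct evaluation at $d=15$ (about $17.86$) confirms this.
\end{itemize}
So $\delta_F<2\pi e$ forces $d\le 14$. Degrees $d\le 14$ need no check, since only the implication for large $d$ is claimed.
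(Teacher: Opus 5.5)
\textbf{There is a genuine gap at $d=10$.} You say that Odlyzko's unconditional tables give $\delta_F\ge 14$ for totally real fields of degree $10$. They do not. The explicit bound you quote gives only $36.347\,e^{-1.0667}\approx 12.5$ at $d=10$, and the unconditional analytic bounds in degree $10$ also stay below $14$. That is why Voight's enumeration of totally real fields with $\delta_F\le 14$ had to treat degree $10$ by direct computer search, which found no such fields.

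The value $\beta_{10}=14$ is exactly that enumeration cutoff. The paper's proof takes it from Voight's Table~3: no totally real field of degree $10$ has $\delta_F\le 14$, hence $\delta_F>14$. So the $d=10$ case rests on a computational classification, not an analytic discriminant bound. Your citation must be replaced by Voight's result, since no reading of Odlyzko's tables will produce it.

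The rest of your argument matches the paper:
\begin{itemize}
\item $1\le d\le 9$ follows from the minimal discriminants.
\item The value $14.083$ for $d=11$ is an Odlyzko-type bound; the paper takes it from Voight's \S1.1.
\item For $d\ge 12$ you use the explicit unconditional bound $\delta_F>36.347\,e^{-10.667/d}$ from Odlyzko's Table~4 with $b=4$. This holds for every totally real field, not just asymptotically, so your worry about discrete tables is unnecessary.
\item The final step uses the threshold $10.667/\log\bigl(36.347/(2\pi e)\bigr)\approx 14.12$ to get $d\le 14$, exactly as in the paper.
\end{itemize}
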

\begin{proof}
	Using the unconditional bound in \cite[Table~4 and description of tables]{odlyzko_discriminant-tables_1976} with $b=4.000$, $r_{1}=d$, $r_{2}=0$, $A=36.347$, and $E=10.667$, we obtain $\Delta_{F}>A^{d}e^{-E}$. It follows that
	\begin{align}\label{eq:odlyzko-explicit}
		\delta_{F}=\Delta_{F}^{1/d}>Ae^{-E/d}=36.347e^{-10.667/d}.
	\end{align}
	
	For $1\le d\le9$, by the definition of $\Delta_{d}^{\min}$, we have
	\begin{align*}
		\delta_{F}=\Delta_{F}^{1/d}
		\ge(\Delta_{d}^{\min})^{1/d}
		=\beta_{d}.
	\end{align*}

	From \cite[Table 3 and \S 1.1]{voight_totally-real_2008}, we know that $\delta_{F}>14$ if $d=10$ and $\delta_{F}>14.083$ if $d\ge11$. Combining these bounds with \eqref{eq:odlyzko-explicit} and the bounds for $1\le d\le9$ above, we obtain $\delta_{F}\ge\beta_{d}$ for all $d\ge 1$. 
	
	If $\delta_{F}<2\pi e$, then by \eqref{eq:odlyzko-explicit},
	\begin{align*}
		d<\frac{10.667}{\log\bigl(36.347/(2\pi e)\bigr)}<15
	\end{align*}
	and thus $d\le14$.
\end{proof}

\begin{cor}\label{cor:finite-grid}
	If $L$ is trace Euclidean, then $
		1\le n\le 34 $ and $1\le d\le 14$.
\end{cor}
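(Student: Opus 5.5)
The plan is to combine Lemma \ref{lem:root-discriminant-bound} with Lemma \ref{lem:discriminant-lower-bounds}. The degree bound comes first. By Lemma \ref{lem:root-discriminant-bound}, every trace Euclidean lattice satisfies $\delta_F<2\pi e$. The last assertion of Lemma \ref{lem:discriminant-lower-bounds} then gives $d\le 14$ at once.

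For the rank, I will use the sharper estimate from Lemma \ref{lem:root-discriminant-bound}:
\begin{align*}
	\beta_d\le\delta_F<\frac{2c\pi e}{n}\le\frac{4\pi e}{n},
\end{align*}
with $c\le 2$. This yields $n<4\pi e/\beta_d$. The minimum of $\beta_d$ over $d\ge1$ is $\beta_1=1$, since $\Delta_1^{\min}=1$. Taken alone this only gives $n<4\pi e\approx 34.16$, that is, $n\le 34$. This already matches the stated bound, so no refinement over the cruder inequality is needed. The case $n=1$ forces $c=1$, but that changes nothing for the upper bound on $n$.

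For rigour, I will check that $\beta_d\ge1$ for every $d$. For $2\le d\le 9$ this follows from the listed minimal discriminants, which all exceed $1$. For $d=10,11$ the values are $14$ and $14.083$. For $d\ge12$ we have $36.347e^{-10.667/12}>1$. In any case, only $\beta_d\ge 1$ is used, and this holds simply because $\Delta_F\ge1$.

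The only step needing care is the numerical one: confirming that $4\pi e<35$ (indeed $4\pi e\approx 34.16$), and, for the degree, that $10.667/\log(36.347/(2\pi e))<15$. The paper's Lemma \ref{lem:discriminant-lower-bounds} already verifies the latter. So I expect no real obstacle; the corollary is a direct assembly of the two preceding lemmas.
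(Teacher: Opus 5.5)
Your proposal is correct and is essentially the paper's proof: the rank bound $n<4\pi e<35$ comes from $\delta_F\ge1$ together with $\delta_F<2c\pi e/n\le 4\pi e/n$ from Lemma \ref{lem:root-discriminant-bound}, and $d\le14$ comes from $\delta_F<2\pi e$ and the last assertion of Lemma \ref{lem:discriminant-lower-bounds}. As you note yourself, the detour through $\beta_d$ is unnecessary, since only $\delta_F\ge1$ is needed.
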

\begin{proof}
	Since $\delta_{F}\ge1$, it follows from Lemma \ref{lem:root-discriminant-bound} that $n<4\pi e<35$ and $\delta_{F}<2\pi e$. Hence, by Lemma \ref{lem:discriminant-lower-bounds}, $d\le14$.
\end{proof}

For the remainder of this section, suppose that $L$ is trace Euclidean. By Lemma \ref{lem:discriminant-lower-bounds}, we have $\Delta_{F}^{1/d}=\delta_{F}\ge\beta_{d}$. So, by  \eqref{eq:disc-bound-step1}, we further have
\begin{align}\label{eq:Hd}
	\nr(\mathfrak{v}(L))\le H(n,d)
	\coloneq\frac{(\pi d)^{nd}}{\Gamma(nd/2+1)^{2}\beta_{d}^{nd}}.
\end{align}
Define
\begin{align*}
	h_{\mathfrak{s}}(n,d)\coloneq\frac1n\log H(n,d)\quad\text{and}\quad
	h_{\mathfrak{n}}(n,d)\coloneq d\log2+\frac1n\log H(n,d).
\end{align*}
From \eqref{norm-inclusion} and \eqref{eq:Hd}, 
\begin{equation}\label{eq:def-h}
	\begin{aligned}
		 &\log \nr(\mathfrak{s}(L))\le \log( \nr(\mathfrak{v}(L))^{1/n})\le  \dfrac{1}{n}\log H(n,d)= h_{\mathfrak{s}}(n,d),\\
		&\log \nr(\mathfrak{n}(L))\le  \log (2^{d}\nr(\mathfrak{v}(L))^{1/n})\le   d\log2+\frac1n\log H(n,d)=h_{\mathfrak{n}}(n,d).
	\end{aligned}
\end{equation}
So $h_{\mathfrak{s}}(n,d)\ge0$ in the classic integral case and $h_{\mathfrak{n}}(n,d)\ge0$ in the integral case. Therefore,
\begin{align}\label{eq:admissible-pairs}
	H(n,d)\ge
	\begin{cases}
		1 &\text{if $L$ is classic integral or $n=1$},\\
		2^{-nd} &\text{otherwise}.
	\end{cases}
\end{align}

\section{Finiteness theorems for trace Euclidean lattices}
\label{sec:finiteness}

In this section, we use the bounds from Section \ref{sec:root-discriminant-bounds} to prove the finiteness theorems.  

\begin{lem}\label{lem:bounded-discriminant-volume-finiteness}
	Fix positive integers $n, d$, and positive real numbers
	$D, C$. There are only finitely many isomorphism classes of totally real fields
	$F$ of degree $d$ with $\Delta_{F}\le D$ that admit a
	positive definite classic integral (resp. integral)
	$\mathcal{O}_{F}$-lattice $L$ of rank $n$ satisfying
	\begin{align*}
		\nr(\mathfrak{v}(L))\le C.
	\end{align*}
	Moreover, for each such fixed field $F$, there are only
	finitely many isometry classes of such lattices.
\end{lem}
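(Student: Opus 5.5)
The plan splits the statement into two parts: finiteness of the fields, which is classical, and finiteness of the lattices over a fixed field, where the real work lies.

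\textbf{Finiteness of fields.} The first assertion is Hermite's theorem. There are only finitely many number fields of fixed degree $d$ with $\Delta_F\le D$, hence only finitely many totally real ones up to isomorphism. The existence of $L$ plays no role here.

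\textbf{Lattices over a fixed field.} Fix $F$ and regard $\Lambda=\sigma_V(L)$ as a $\mathbb{Z}$-lattice of rank $nd$ with the trace form $q=q_T\circ\varphi$. By Proposition \ref{prop:trace-coordinates}(ii), $q$ is a positive definite integral quadratic form on $\mathbb{Z}^{nd}$. By Lemma \ref{lem:gram-det}, its Gram determinant is
\begin{align*}
\det G_\Lambda=\Delta_F^n\nr(\mathfrak{v}(L))\le D^nC.
\end{align*}
Since $q$ is integral, $2G_\Lambda$ has integer entries. Hence the integral $\mathbb{Z}$-lattices of rank $nd$ with bounded determinant fall into finitely many $\mathbb{Z}$-isometry classes. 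This follows from reduction theory: a Minkowski or Hermite reduced Gram matrix has entries bounded in terms of the determinant, and the entries lie in $\tfrac12\mathbb{Z}$.

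It remains to pass from $\mathbb{Z}$-isometry classes of $(\Lambda,q_T)$ back to $\mathcal{O}_F$-isometry classes of $L$. This step is the main obstacle, because $q_T$ forgets the $\mathcal{O}_F$-structure. I would use the finiteness of automorphisms. Recover $L$ as the pair consisting of the $\mathbb{Z}$-lattice $\Lambda$ with $q_T$ and the action of $\mathcal{O}_F$, that is, a ring embedding $\iota\colon\mathcal{O}_F\to \mathrm{End}_\mathbb{Z}(\Lambda)$. Multiplication by $\alpha$ is self-adjoint for $B_T$, since $\tr(B(\alpha x,y))=\tr(B(x,\alpha y))$. The $F$-valued form $B$ is determined by $B_T$ and $\iota$, because $\tr(\alpha B(x,y))=B_T(\iota(\alpha)x,y)$ for all $\alpha$ and the trace pairing is nondegenerate. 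So it suffices to show that, for a fixed $(\Lambda,q_T)$, there are finitely many possible $\iota$ up to conjugation by $O(\Lambda)$. In fact there are finitely many $\iota$ outright.

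Fix a $\mathbb{Z}$-basis $\omega_1,\dots,\omega_d$ of $\mathcal{O}_F$. Each $\iota(\omega_j)$ is $B_T$-self-adjoint with eigenvalues $\sigma_k(\omega_j)$, which are fixed real numbers. Its operator norm with respect to $\|\cdot\|_T$ is therefore $\max_k|\sigma_k(\omega_j)|$, which is bounded. An integer matrix in a fixed basis of $\Lambda$ with bounded operator norm has bounded entries, so there are finitely many choices of each $\iota(\omega_j)$, and hence finitely many $\iota$.

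\textbf{Conclusion.} Each isometry class of $L$ yields a pair consisting of a $\mathbb{Z}$-class of $(\Lambda,q_T)$ and one of finitely many $\iota$, and this pair determines $L$ up to $\mathcal{O}_F$-isometry. Hence there are finitely many isometry classes of such $L$, in both the classic integral and the integral cases. The only point to double-check is that "$\Lambda$ with $q_T$ and $\iota$" genuinely reconstructs $(V,Q,L)$ up to isometry. This holds because $V=\mathbb{Q}\Lambda$ carries the $F$-action via $\iota$, and $Q$ is recovered from $B$.
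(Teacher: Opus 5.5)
Your proof is correct. For the lattices over a fixed $F$, however, it takes a different route from the paper. For the fields, both you and the paper use Hermite's theorem.

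\textbf{The paper's route.} The paper works over $\mathcal{O}_F$ and cites O'Meara.
\begin{itemize}
\item By Remark 103:5, only finitely many $n$-dimensional quadratic spaces over $F$ support a lattice of integral scale with a prescribed volume ideal.
\item By Theorem 103:4, each such space carries only finitely many classes of these lattices.
\item In the classic integral case, $\mathfrak{v}(L)$ is an integral ideal of norm at most $C$, so only finitely many volume ideals occur.
\item The integral case is reduced to the classic one by scaling: $L^{(2)}$ has integral scale and $\nr(\mathfrak{v}(L^{(2)}))=2^{nd}\nr(\mathfrak{v}(L))$.
\end{itemize}

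\textbf{Your route.} You restrict scalars to $\mathbb{Z}$.
\begin{itemize}
\item Lemma \ref{lem:gram-det} bounds the determinant of the rank-$nd$ trace lattice.
\item Reduction theory over $\mathbb{Z}$ then gives finitely many classes of $(\Lambda,q_T)$.
\item You transport each $\mathcal{O}_F$-action to a fixed representative via a chosen $\mathbb{Z}$-isometry. Your write-up leaves this transport implicit, but it is what makes ``finitely many $\iota$'' meaningful.
\item Self-adjointness and the operator-norm bound $\max_k|\sigma_k(\omega_j)|$ then leave only finitely many integer matrices $\iota(\omega_j)$.
\item Two lattices with the same transported action are isometric over $\mathcal{O}_F$. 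An $\mathcal{O}_F$-linear isometry of the trace forms preserves $\tr_{F/\mathbb{Q}}(\alpha B(x,y))$ for all $\alpha$, hence preserves $B$.
\end{itemize}

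\textbf{Comparison.} Your route is self-contained: it needs only Lemma \ref{lem:gram-det}, classical Hermite--Minkowski reduction, and an elementary norm estimate. It also treats both integrality conditions uniformly, since integrality enters only through $2G_\Lambda$ being an integer matrix, so no rescaling is needed. The cost is that it uses positive definiteness essentially, to get the norm $\|\cdot\|_T$ that the operator-norm argument relies on. The paper's route is shorter given the citations, and O'Meara's results do not depend on definiteness.
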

\begin{proof}
	Since $\Delta_{F}$ is a positive integer, by 
	\cite[Chapter III, \S2, Theorem (2.16)]
	{neukirch_algebraic_1999}, there are only finitely many
	isomorphism classes of fields $F$ such that
	$[F:\mathbb{Q}]=d$ and $\Delta_{F}\le D$. Fix one such $F$.
	
	From \cite[Remark 103:5]{omeara_quadratic_1963},
	for every integral ideal $\mathfrak{a}$ of $F$, there are
	only finitely many isometry classes of $n$-dimensional
	quadratic spaces over $F$ which support an
	$\mathcal{O}_{F}$-lattice of integral scale and volume
	$\mathfrak{a}$. On each such space, from
	\cite[Theorem~103:4]{omeara_quadratic_1963} there are only finitely many proper isometry classes, and hence
	only finitely many isometry classes, of these lattices.
	
	If $L$ is classic integral, then, by
	\eqref{norm-scale-volume-inclusion},
	\begin{align*}
		\mathfrak{v}(L)
		\subseteq
		\mathfrak{s}(L)^{n}
		\subseteq
		\mathcal{O}_{F}.
	\end{align*}
	Thus $\mathfrak{v}(L)$ is an integral ideal and
	$\nr(\mathfrak{v}(L))\le C$. Since there are only finitely many
	integral ideals of norm at most $C$, the assertion follows by the finiteness results above in this case.
	
	If $L$ is integral, then, by \cite[\S82J]
	{omeara_quadratic_1963} and
	\eqref{norm-scale-volume-inclusion},
	\begin{align*}
		\mathfrak{s}(L^{(2)})
		&=
		2\mathfrak{s}(L)
		\subseteq
		\mathfrak{n}(L)
		\subseteq
		\mathcal{O}_{F}\quad \text{and}\quad
		\mathfrak{v}(L^{(2)})=
		2^{n}\mathfrak{v}(L)
		\subseteq
		\mathfrak{n}(L)^{n}
		\subseteq
		\mathcal{O}_{F},
	\end{align*}
	where $L^{(2)}$ denotes the scaling of $L$ by $2$. 
	Therefore, $L^{(2)}$ has integral scale and integral volume, with
	\begin{align*}
		\nr\bigl(\mathfrak{v}(L^{(2)})\bigr)
		=
		2^{nd}\nr(\mathfrak{v}(L))
		\le
		2^{nd}C.
	\end{align*} 
	Since there are only finitely many integral ideals of norm at most
	$2^{nd}C$, the preceding finiteness result applies to $L^{(2)}$. Since $L^{(2)}\cong M^{(2)}$ if and only if $L\cong M$, scaling by $2$ induces a bijection on the corresponding isometry
	classes, and the assertion follows for $L$.
\end{proof}
\begin{proof}[Proof of Theorems \ref{thm:finiteness-classic} and \ref{thm:finiteness-integral}]
	Let $L$ be a positive definite trace Euclidean $\mathcal{O}_{F}$-lattice as in either theorem. Recall that $\delta_{F}=\Delta_{F}^{1/d}$ and the bounds as asserted follow from Lemma \ref{lem:root-discriminant-bound}. By Corollary \ref{cor:finite-grid}, only finitely many pairs $(n,d)$ can occur.
	
	For each such pair, by Lemma \ref{lem:root-discriminant-bound}, we have $\Delta_{F}=\delta_{F}^{d}<(2\pi e)^{d}$. Since $\Delta_{F}\ge1$, by Lemma \ref{lem:discriminant-norm-bound}, we further have
	\begin{align*}
		\nr(\mathfrak{v}(L))\le
		\frac{(\pi d)^{nd}}{\Gamma(nd/2+1)^{2}\Delta_{F}^{n}}
		\le\frac{(\pi d)^{nd}}{\Gamma(nd/2+1)^{2}}.
	\end{align*}
	Applying Lemma \ref{lem:bounded-discriminant-volume-finiteness} with
	\begin{align*}
		D=(2\pi e)^{d}\quad\text{and}\quad
		C=\frac{(\pi d)^{nd}}{\Gamma(nd/2+1)^{2}},
	\end{align*}
	we obtain the asserted finiteness results.
\end{proof}

\section{Rank one trace Euclidean lattices over real quadratic fields}
\label{sec:trace-euclidean-fields}

Before discussing the classification, we explain the relation with
thin ideal classes. Let $F$ be totally real of degree $d$, and let
$L=(\mathfrak{a},ax^{2})$ be an integral $\mathcal{O}_{F}$-lattice
of rank one. Suppose that $L$ is trace Euclidean, and put
$n=\nr(a\mathfrak{a}^{2})$. Then $n\in\mathbb{Z}_{>0}$.
By Lemmas \ref{lem:gram-det} and \ref{lem:phi-Euclidean-trace},
the trace lattice has determinant $\Delta_{F}n$ and squared
covering radius $\rho_{T}(L)<d$. In the notation of \cite[\S\S1--2 and Lemma 4.3(ii)]{bayer-fluckiger_euclidean_2006}, we have
\begin{align*}
	\tau_{\min}(\mathfrak{a})
	&\le \frac{\rho_T(L)}{(\Delta_{F}n)^{1/d}}
	< \frac{d}{(\Delta_{F}n)^{1/d}}
	\le \frac{d}{\Delta_{F}^{1/d}}
	=\gamma_{\min}(\mathcal{O}_{F}).
\end{align*}
Thus $\mathfrak{a}$ is thin in the sense of \cite[Definition~12.1]{bayer-fluckiger_euclidean_2006}. So the finiteness of the ground fields in rank one also follows from \cite[Proposition~12.4]{bayer-fluckiger_euclidean_2006}. 

In the remainder of this section, we assume that $F$ is a real quadratic field. Take $V=F$ with $Q(x)=ax^{2}$,
where $a\in F$ is totally positive. Let $\mathfrak{a}$ be a nonzero fractional
ideal of $F$ such that $a\mathfrak{a}^{2}\subseteq\mathcal{O}_{F}$, and let
$L=(\mathfrak{a},Q)$. Every positive definite integral lattice of rank one
over $F$ has this form after choosing an $F$-basis of its quadratic space.
By Lemma \ref{lem:Phi}(v), we have
\begin{align}\label{eq:trace-covering-maximum}
	\rho_{a}(\mathfrak{a})
	\coloneq\rho_{T}(L)
	=\max_{x\in F}\inf_{y\in\mathfrak{a}}
	\tr_{F/\mathbb{Q}}\bigl(a(x-y)^{2}\bigr).
\end{align}
When $\mathfrak{a}=\mathcal{O}_{F}$ and $a=1$, we write
$\rho(\mathcal{O}_{F})=\rho_{1}(\mathcal{O}_{F})$.

\begin{prop}
	\label{prop:trace-euclidean-quadratic}
	Let $F=\mathbb{Q}(\sqrt m)$ be a real quadratic field with
	square-free $m>1$. Let $\mathfrak{a}$ be a nonzero fractional ideal of $F$,
	and let $a\in F$ be totally positive with
	$a\mathfrak{a}^{2}\subseteq\mathcal{O}_{F}$.
	There exists a $\mathbb{Z}$-basis $\{\beta_{1},\beta_{2}\}$ of
	$\mathfrak{a}$ for which the trace Gram matrix satisfies
	\begin{align*}
		G=\begin{pmatrix}
			\tr_{F/\mathbb{Q}}(a\beta_{1}^{2})&\tr_{F/\mathbb{Q}}(a\beta_{1}\beta_{2})\\
			\tr_{F/\mathbb{Q}}(a\beta_{1}\beta_{2})&\tr_{F/\mathbb{Q}}(a\beta_{2}^{2})
		\end{pmatrix}
		=\begin{pmatrix}A&B\\B&C\end{pmatrix},
		\quad 0<A\le C,\quad 2|B|\le A.
	\end{align*}
	Moreover, we have $	\det G=AC-B^{2}=\Delta_{F}\nr_{F/\mathbb{Q}}(a)\nr(\mathfrak{a})^{2}$ and
	\begin{align*}
	    \rho_{a}(\mathfrak{a})&=\frac{AC(A+C-2|B|)}{4(AC-B^{2})}.
	\end{align*}
	In particular, when $\mathfrak{a}=\mathcal{O}_{F}$ and $a\in\mathbb{Z}_{>0}$, then
	\begin{enumerate}[label=\rm(\roman*), itemindent=0em, leftmargin=*]
		\item If $m\equiv 2,3\pmod{4}$, then $\rho_{a}(\mathcal O_F)=a(m+1)/2$.
		
		\item If $m\equiv 1\pmod{4}$, then $\rho_{a}(\mathcal O_F)=a(m+1)^2/(8m)$.
	\end{enumerate}
\end{prop}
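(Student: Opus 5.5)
The proof has three parts: reduce to a binary integral quadratic form, apply a covering-radius formula for reduced binary forms, and then specialize to $\mathfrak{a}=\mathcal{O}_F$.

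\textbf{Step 1: reduction to a binary form.} Take any $\mathbb{Z}$-basis of $\mathfrak{a}$. By Proposition \ref{prop:trace-coordinates}(ii), the trace form $q(u_1,u_2)=\tr_{F/\mathbb{Q}}(a(u_1\beta_1+u_2\beta_2)^2)$ is a positive definite integral binary quadratic form. By Lagrange--Gauss reduction, applying a change of basis in $\mathrm{GL}_2(\mathbb{Z})$ gives $0<A\le C$ and $2|B|\le A$. Such a change replaces $\{\beta_1,\beta_2\}$ by another $\mathbb{Z}$-basis of $\mathfrak{a}$, so this proves the first assertion. The determinant formula comes from Lemma \ref{lem:gram-det} with $n=1$, since there $\mathfrak{v}(L)=a\mathfrak{a}^2$ and so $\nr(\mathfrak{v}(L))=\nr_{F/\mathbb{Q}}(a)\nr(\mathfrak{a})^2$; here $\nr_{F/\mathbb{Q}}(a)>0$ because $a$ is totally positive.

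\textbf{Step 2: the covering radius.} By \eqref{eq:trace-covering-maximum} and Proposition \ref{prop:trace-coordinates}(iii), $\rho_a(\mathfrak{a})$ equals the squared covering radius of the planar lattice with Gram matrix $G$. Replacing $\beta_2$ by $-\beta_2$ if necessary, we may assume $B\ge0$, so the angle $\theta$ between the basis vectors $e_1,e_2$ is at most $\pi/2$. Since $2|B|\le A\le C$, the triangle with vertices $0,e_1,e_2$ is non-obtuse: $B\ge0$, $A-B\ge0$ and $C-B\ge0$. The plane is tiled by this triangle and its translates and reflections, so the covering radius is the circumradius $R$ of this triangle.

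Its sides have squared lengths $A$, $C$ and $A+C-2B$, and its area is $\frac12\sqrt{AC-B^2}$. The formula $R=abc/(4\cdot\text{area})$ gives
\begin{align*}
	R^2=\frac{AC(A+C-2B)}{4(AC-B^2)}.
\end{align*}
This is the covering formula of Clark--Jagy \cite[Theorem 5.1(b)]{clark_ADC-II-2014}, which I would cite directly. The one point to check carefully is that for non-obtuse triangles the deep hole is the circumcenter; this is the main obstacle, though it is standard.

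\textbf{Step 3: specialization to $\mathfrak{a}=\mathcal{O}_F$, $a\in\mathbb{Z}_{>0}$.}

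(i) If $m\equiv 2,3\pmod 4$, take the basis $\{1,\sqrt m\}$. The Gram matrix is $G=\diag(2a,2am)$, which is already reduced with $B=0$. The formula gives $\rho=(4a^2m)(2a+2am)/(16a^2m)=a(m+1)/2$.

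(ii) If $m\equiv1\pmod4$, take the basis $\{1,\omega\}$ with $\omega=(1+\sqrt m)/2$. Then $\tr(1)=2$, $\tr(\omega)=1$ and $\tr(\omega^2)=(m+1)/2$, so $G=a\begin{pmatrix}2&1\\1&(m+1)/2\end{pmatrix}$. This is reduced when $m\ge5$, since then $2\le(m+1)/2$ and $2B=2a\le A=2a$. We have $AC-B^2=a^2m$, and
\begin{align*}
	A+C-2B=a\Bigl(2+\frac{m+1}{2}-2\Bigr)=\frac{a(m+1)}{2}.
\end{align*}
Hence $\rho=a^2(m+1)\cdot a(m+1)/2\,/\,(4a^2m)=a(m+1)^2/(8m)$.
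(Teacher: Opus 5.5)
Your proposal is correct and follows the paper's proof essentially step for step. The shared steps are: Gauss reduction of the integral trace form on $\mathfrak{a}$; the determinant from Lemma \ref{lem:gram-det} with $\mathfrak{v}(L)=a\mathfrak{a}^{2}$; the sign change $\beta_2\mapsto-\beta_2$ to get $B\ge0$ before citing \cite[Theorem~5.1(b)]{clark_ADC-II-2014}; and the same specialization to the bases $\{1,\omega\}$.

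Your circumradius reading of the formula is a useful sanity check. Its lower-bound half would still need the empty-circumcircle property of the reduced triangle, but that is unnecessary once you cite Clark--Jagy, as the paper does.
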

\begin{proof}
	Since $a\mathfrak{a}^{2}\subseteq\mathcal{O}_{F}$, the trace
	pairing $\tr_{F/\mathbb{Q}}(axy)$ on $\mathfrak{a}$ is
	positive definite and integral. By Gauss reduction, there
	exists a $\mathbb{Z}$-basis of $\mathfrak{a}$ whose trace
	Gram matrix satisfies
	\begin{align*}
		G=\begin{pmatrix}A&B\\B&C\end{pmatrix},
		\qquad A,B,C\in\mathbb{Z},\quad
		0<A\le C,\quad 2|B|\le A.
	\end{align*}
	Fix any such basis $\{\beta_{1},\beta_{2}\}$, and put
	\begin{align*}
		q(x,y)
		=\tr_{F/\mathbb{Q}}\bigl(a(x\beta_{1}+y\beta_{2})^{2}\bigr)
		=Ax^{2}+2Bxy+Cy^{2}.
	\end{align*}
	Since $\mathfrak{v}(L)=a\mathfrak{a}^{2}$,
	Lemma \ref{lem:gram-det} implies that
	\begin{align*}
		\det G=AC-B^{2}
		=\Delta_{F}\nr_{F/\mathbb{Q}}(a)\nr(\mathfrak{a})^{2}.
	\end{align*}
	Since $F=\mathbb{Q}\beta_{1}+\mathbb{Q}\beta_{2}$ and
	$\mathfrak{a}=\mathbb{Z}\beta_{1}+\mathbb{Z}\beta_{2}$, by
	\eqref{eq:trace-covering-maximum}, we have
	\begin{align*}
		E(q)\coloneq\sup_{z\in\mathbb{Q}^{2}}
		\inf_{\lambda\in\mathbb{Z}^{2}}q(z-\lambda)
		=\rho_{a}(\mathfrak{a}).
	\end{align*}
	Put $
		q_{+}(x,y)=Ax^{2}+2|B|xy+Cy^{2}$. One can replace $y$ by $-y$ if $B<0$  and so $E(q_{+})=E(q)$.
	Since $0\le2|B|\le A\le C$, applying
	\cite[Theorem~5.1(b)]{clark_ADC-II-2014} to $q_{+}$, we deduce that
	\begin{align}\label{rhoa}
		\rho_{a}(\mathfrak{a})
		=E(q)=E(q_{+})
		=\frac{AC(A+C-2|B|)}{4(AC-B^{2})}.
	\end{align}
	 For the last two assertions, take $\beta_{1}=1$ and
	 $\beta_{2}=\omega$, where
	 \begin{align}\label{omega}
	 	\omega=
	 	\begin{cases}
	 		\sqrt{m}
	 		&\text{if $m\equiv2,3\pmod{4}$},\\
	 		(1+\sqrt{m})/2
	 		&\text{if $m\equiv1\pmod{4}$}.
	 	\end{cases}
	 \end{align}
	 Recall from \cite[Theorem 5.4.2]{alaca_williams_2004} that
	 these form an integral basis of $\mathcal{O}_{F}$.
	 For $a\in\mathbb{Z}_{>0}$, the trace Gram matrices are
	 \begin{align*}
	 	G=
	 	\begin{cases}
	 		a\begin{pmatrix}2&0\\0&2m\end{pmatrix}
	 		&\text{if $m\equiv2,3\pmod{4}$},\\[6pt]
	 		a\begin{pmatrix}2&1\\1&(m+1)/2\end{pmatrix}
	 		&\text{if $m\equiv1\pmod{4}$}.
	 	\end{cases}
	 \end{align*}
	 In both cases, $0<A\le C$ and $2B\le A$.
	 Plugging into \eqref{rhoa} above gives the desired formulas.
\end{proof}
\begin{proof}[Proof of Theorem \ref{thm:rank-one-classification}]
	Write $L=(\mathfrak{a},ax^{2})$, where $a\in F$ is totally positive and
	$\mathfrak{a}$ is a nonzero fractional ideal with
	$a\mathfrak{a}^{2}\subseteq\mathcal{O}_{F}$.
	Choose a $\mathbb{Z}$-basis $\{\beta_{1},\beta_{2}\}$ as in
	Proposition \ref{prop:trace-euclidean-quadratic}, changing one sign
	if necessary so that the trace Gram matrix satisfies
	\begin{align*}
		G=\begin{pmatrix}A&B\\B&C\end{pmatrix},
		\quad A,B,C\in\mathbb{Z},\quad 0<A\le C,\quad 0\le2B\le A.
	\end{align*}
	By Lemma \ref{lem:phi-Euclidean-trace}, we have
	\begin{align}\label{eq:rank-one-strict-criterion}
		L\text{ is trace Euclidean}
		\quad\Longleftrightarrow\quad
		\rho_T(L)<2.
	\end{align}
	Suppose that $\rho_T(L)<2$. Put
	$n=\nr(a\mathfrak{a}^{2})\in\mathbb{Z}_{>0}$. By
	Proposition \ref{prop:trace-euclidean-quadratic}, we have
	\begin{align*}
		D=\det G=AC-B^{2}
		=\Delta_F\nr_{F/\mathbb{Q}}(a)\nr(\mathfrak{a})^{2}
		=\Delta_F n.
	\end{align*}
	Since $a\beta_1^{2}$ is a totally positive algebraic integer,
	\begin{align*}
		A=\tr_{F/\mathbb{Q}}(a\beta_1^{2})
		\ge2\sqrt{\nr_{F/\mathbb{Q}}(a\beta_1^{2})}\ge2.
	\end{align*}
	By \eqref{rhoa}, we have
	\begin{align*}
		\rho_T(L)-\frac C4
		=\dfrac{C(A-B)^{2}}{4(AC-B^{2})}=\dfrac{C(A-B)^{2}}{4D}\ge0.
	\end{align*}
	It follows that
	\begin{equation}\label{eq:rank-one-gram-bounds}
		\begin{aligned}
			2\le A\le C\le7,\quad 0\le2B\le A,\quad
			AC(A+C-2B)<8(AC-B^{2}).
		\end{aligned}
	\end{equation}
	The integer triples satisfying \eqref{eq:rank-one-gram-bounds}
	are given by
	\begin{align*}
		(A,B,C)\in\bigl\{&(2,0,2),(2,0,3),(2,0,4),(2,0,5),
		(2,1,2),(2,1,3),(2,1,4),(2,1,5),\\
		&(2,1,6),(2,1,7),(3,0,3),(3,0,4),
		(3,1,3),(3,1,4),(3,1,5),(3,1,6),\\
		&(4,1,4),(4,1,5),(4,2,4),(4,2,5),
		(4,2,6),(5,2,5)\bigr\}.
	\end{align*}
	These $22$ triples all satisfy $D\le21$. Since $D=\Delta_F n$,
	we have $\Delta_F\le21$. For square-free $m>1$, recall that
	\begin{align*}
		\Delta_F=
		\begin{cases}
			m,&m\equiv1\pmod4,\\
			4m,&m\equiv2,3\pmod4.
		\end{cases}
	\end{align*}
	Hence $\Delta_F\in\{5,8,12,13,17,21\}$ and thus
	$m\in\{2,3,5,13,17,21\}$.
	We retain only the triples for which
	$n=(AC-B^{2})/\Delta_F$ is a positive integer.
	When $m\equiv2,3\pmod4$, we also have
	$\tr_{F/\mathbb{Q}}(\mathcal{O}_{F})=2\mathbb{Z}$, so
	$A,B,C\in2\mathbb{Z}$.
	This leaves the following nine candidates,
	\begin{align*}
		(m,A,B,C,n)\in \{&(2,2,0,4,1),(3,4,2,4,1),(5,2,1,3,1),(5,2,0,5,2),(5,4,1,4,3),\\
		&(5,4,2,6,4),(13,2,1,7,1),(17,3,1,6,1),(21,5,2,5,1)\}.
	\end{align*}
	Put $b_i=a\beta_i^{2}$ and
	$k_i=[\mathfrak{a}:\beta_i\mathcal{O}_{F}]
	\in\mathbb{Z}_{>0}$ for $i=1,2$.
	Since $\beta_i\mathcal{O}_{F}\subseteq\mathfrak{a}$, we have
	\begin{align}\label{eq:rank-one-index-norm}
		\nr_{F/\mathbb{Q}}(b_i)
		=\nr_{F/\mathbb{Q}}(a)\nr_{F/\mathbb{Q}}(\beta_i)^{2}
		=nk_i^{2}.
	\end{align}
	Put $\theta=(\Delta_F+\sqrt{\Delta_F})/2$, so that
	$\mathcal{O}_{F}=\mathbb{Z}+\mathbb{Z}\theta$.
	Write $b_1=s+t\theta$ with $s,t\in\mathbb{Z}$.
	Since $A=\tr_{F/\mathbb{Q}}(b_1)=2s+t\Delta_F$, we obtain
	\begin{align}\label{eq:rank-one-trace-norm}
		b_1=\frac{A+t\sqrt{\Delta_F}}2,\quad
		A\equiv\Delta_Ft\pmod2,\quad
		A^{2}-\Delta_Ft^{2}=4nk_1^{2}.
	\end{align}
	In particular, $4nk_1^{2}\le A^{2}$.
	The candidate $(m,A,B,C,n)=(5,2,0,5,2)$ is therefore ruled out.
	For $(m,A,B,C,n)=(5,4,1,4,3)$, we must have $k_1=1$, and \eqref{eq:rank-one-trace-norm} gives $5t^{2}=4$,
	which is impossible.
	Similarly, the candidate $(m,A,B,C,n)=(17,3,1,6,1)$
	would give $k_1=1$ and $17t^{2}=5$, which is also impossible.
	For the remaining candidates with $m\ne3$,
	\eqref{eq:rank-one-trace-norm} gives
	\begin{align*}
		(b_1,k_1)=
		\begin{cases}
			(1,1),&m=2,13\text{ or }(m,n)=(5,1),\\
			(2,1),&(m,n)=(5,4),\\
			((5\pm\sqrt{21})/2,1),&m=21.
		\end{cases}
	\end{align*}
	In all these cases, we have $k_1=1$, so
	$\mathfrak{a}=\beta_1\mathcal{O}_{F}$.
	
	For $m=3$, we have $A=C=4$ and $n=1$.
	Applying \eqref{eq:rank-one-trace-norm} to each $b_{i}$ gives
	\begin{align*}
		(b_i,k_i)\in
		\{(2,2),(2+\sqrt3,1),(2-\sqrt3,1)\},
		\qquad i=1,2.
	\end{align*}
	If $b_1=b_2=2$, then $\beta_1^{2}=\beta_2^{2}$ and hence
	$\beta_1=\pm\beta_2$, which contradicts the choice of a basis.
	Thus $k_i=1$ for some $i$, and again
	$\mathfrak{a}=\beta_i\mathcal{O}_{F}$.
	In each case, the map $x\mapsto\beta_i x$ is an isometry from
	$(\mathcal{O}_{F},b_ix^{2})$ to $L$.
	In particular, $L$ is free.
	
	Let
	\begin{align*}
		b=\begin{cases}
			2+\sqrt{3} &\text{if $m=3$},\\
			(5+\sqrt{21})/2 &\text{if $m=21$}.
		\end{cases}
	\end{align*}
	Choose the real embedding $\sigma_{2}$ so that
	$\sigma_{2}(\sqrt{m})=-\sqrt{m}$, and put
	$\bar{b}=\sigma_{2}(b)$.
	Since $b\bar{b}=1$, both $b$ and $\bar{b}$ are units of $\mathcal{O}_{F}$,
	and $\bar{b}=b(\bar{b})^{2}$.
	Thus $\langle \bar{b}\rangle\cong\langle b\rangle$.
	This gives the six pairs in the theorem.
	
	Conversely, each listed coefficient $a$ is a totally positive
	algebraic integer, so the unary lattice $\langle a\rangle$ is positive definite
	and integral. For the six listed lattices, we use the following
	$\mathbb{Z}$-bases of $\mathcal{O}_{F}$:
	\begin{align*}
		(\beta_1,\beta_2)=
		\begin{cases}
			(1,\sqrt2),&(m,a)=(2,1),\\
			(1,\sqrt3-1),&(m,a)=(3,2+\sqrt3),\\
			(1,\omega),&(m,a)=(5,1),(5,2),(13,1),\\
			(3-\omega,1),&(m,a)=(21,(5+\sqrt{21})/2),
		\end{cases}
	\end{align*}
	where $\omega$ is defined in \eqref{omega}. Their trace Gram matrices are reduced, and by
	Proposition \ref{prop:trace-euclidean-quadratic}, we obtain the following table
	\begin{align*} 
		\begin{array}{c|c|c|c}
			\toprule
			m&a&(A,B,C)&\rho_T(\langle a\rangle)\\ 
			\midrule
			2&1&(2,0,4)&3/2\\
			\hline
			3&2+\sqrt3&(4,2,4)&4/3\\
			\hline
			5&1&(2,1,3)&9/10\\
			\hline
			5&2&(4,2,6)&9/5\\
			\hline
			13&1&(2,1,7)&49/26\\
			\hline
			21&(5+\sqrt{21})/2&(5,2,5)&25/14\\
			\bottomrule
		\end{array}
	\end{align*}
	All six radii are strictly less than $2$, so the converse
	follows from the equivalent condition \eqref{eq:rank-one-strict-criterion}.
	Furthermore, the two lattices over $\mathbb{Q}(\sqrt5)$ have
	volume ideals of norms $1$ and $4$, respectively, and hence
	are not isometric. The other isometry classes lie over
	distinct real quadratic fields.
\end{proof}
\begin{proof}[Proof of Corollary \ref{cor:trace-euclidean-quadratic}]
	Let $L=(\mathcal{O}_{F},x^{2})$. By Definition \ref{defn:trace-euclidean-field}, $F$ is trace Euclidean
	if and only if $L$ is trace Euclidean.
	If $F$ is trace Euclidean, Theorem \ref{thm:rank-one-classification}
	implies that $m\in \{2,3,5,13,21\}$.
	For these values of $m$, taking $a=1$ in
	Proposition \ref{prop:trace-euclidean-quadratic}, one can check that
	\begin{align*}
		\rho_T(L)<2
		\quad\Longleftrightarrow\quad
		m\in\{2,5,13\}.
	\end{align*}
	The corollary follows from \eqref{eq:rank-one-strict-criterion}.
\end{proof}
 
\clearpage
\begin{table}[H]
	\centering
	\caption{Admissible pairs $(n,d)$ in all ranks $n\ge1$ permitted by \eqref{eq:admissible-pairs} for classic integral trace Euclidean $\mathcal{O}_{F}$-lattices, together with approximate values of $h_{\mathfrak{s}}(n,d)$ (cf. \eqref{eq:def-h}) and upper bounds for $\nr(\mathfrak{v}(L))$ (cf. \eqref{eq:Hd}).}
	\label{tab:admissible-classical}
	\small
	\begin{tabular}{cccc}
		\toprule
		Degree $d$ & Rank $n$ & $h_{\mathfrak{s}}(n,d)$ & Upper bound for $\nr(\mathfrak{v}(L))$ \\
		\midrule
		\multirow{12}{*}{1}
		& 1 & 1.386294 & $4.00000$ \\
		& 2 & 1.144730 & $9.86961$ \\
		& 3 & 0.954941 & $17.5460$ \\
		& 4 & 0.798156 & $24.3523$ \\
		& 5 & 0.664340 & $27.7075$ \\
		& 6 & 0.547477 & $26.7053$ \\
		& 7 & 0.443662 & $22.3235$ \\
		& 8 & 0.350216 & $16.4732$ \\
		& 9 & 0.265216 & $10.8802$ \\
		& 10 & 0.187232 & $6.50334$ \\
		& 11 & 0.115173 & $3.54985$ \\
		& 12 & 0.048188 & $1.78293$ \\
		\midrule
		\multirow{5}{*}{2}
		& 1 & 2.066316 & $7.89569$ \\
		& 2 & 1.373169 & $15.5855$ \\
		& 3 & 0.871810 & $13.6731$ \\
		& 4 & 0.477289 & $6.74740$ \\
		& 5 & 0.151320 & $2.13102$ \\
		\midrule
		\multirow{3}{*}{3}
		& 1 & 2.268840 & $9.66819$ \\
		& 2 & 1.046447 & $8.10835$ \\
		& 3 & 0.199664 & $1.82029$ \\
		\midrule
		\multirow{2}{*}{4}
		& 1 & 2.151631 & $8.59888$ \\
		& 2 & 0.359872 & $2.05391$ \\
		\midrule
		5 & 1 & 1.777311 & $5.91394$ \\
		\midrule
		6 & 1 & 1.423463 & $4.15148$ \\
		\bottomrule
	\end{tabular}
\end{table}
\begin{table}[H]
	\centering
	\caption{\textbf{Classic integral case.} Upper bounds $N_{d}$ for the rank and $D_{n}$ for the degree of trace Euclidean $\mathcal{O}_{F}$-lattices, covering every positive degree and rank. }
	\label{tab:max-rank-degree-classical}
	\begin{tabular}{cc|cc}
		\toprule
		Degree $d$ & Rank bound $N_{d}$ & Rank $n$ & Degree bound $D_{n}$ \\
		\midrule
		$1$ & $12$ & $1$ & $6$ \\
		$2$ & $5$ & $2$ & $4$ \\
		$3$ & $3$ & $3$ & $3$ \\
		$4$ & $2$ & $4\le n\le5$ & $2$ \\
		$5\le d\le6$ & $1$ & $6\le n\le12$ & $1$ \\
		$d\ge7$ & $0$ & $n\ge13$ & $0$ \\
		\bottomrule
	\end{tabular}
\end{table}
\newpage
\begin{table}[H]
	\centering
	\caption{Admissible pairs $(n,d)$ in all ranks $n\ge1$ permitted by \eqref{eq:admissible-pairs} for integral trace Euclidean $\mathcal{O}_{F}$-lattices, together with approximate values of $h_{\mathfrak{n}}(n,d)$ (cf. \eqref{eq:def-h}) and upper bounds for $\nr(\mathfrak{v}(L))$ (cf. \eqref{eq:Hd}).}
	\label{tab:admissible-norm}
	\small
	\begin{tabular}{cccc}
		\toprule
		Degree $d$ & Rank $n$ & $h_{\mathfrak{n}}(n,d)$ & Upper bound for $\nr(\mathfrak{v}(L))$ \\
		\midrule
		\multirow{29}{*}{1}
		& 1 & 2.079442 & $4.00000$ \\
		& 2 & 1.837877 & $9.86961$ \\
		& 3 & 1.648088 & $17.5460$ \\
		& 4 & 1.491303 & $24.3523$ \\
		& 5 & 1.357488 & $27.7075$ \\
		& 6 & 1.240624 & $26.7053$ \\
		& 7 & 1.136809 & $22.3235$ \\
		& 8 & 1.043364 & $16.4732$ \\
		& 9 & 0.958363 & $10.8802$ \\
		& 10 & 0.880379 & $6.50334$ \\
		& 11 & 0.808320 & $3.54985$ \\
		& 12 & 0.741335 & $1.78293$ \\
		& 13 & 0.678744 & $0.829245$ \\
		& 14 & 0.619997 & $0.359118$ \\
		& 15 & 0.564641 & $0.145499$ \\
		& 16 & 0.512302 & $0.0553806$ \\
		& 17 & 0.462661 & $0.0198757$ \\
		& 18 & 0.415452 & $0.00674795$ \\
		& 19 & 0.370443 & $0.00217358$ \\
		& 20 & 0.327436 & $6.65996\times10^{-4}$ \\
		& 21 & 0.286258 & $1.94579\times10^{-4}$ \\
		& 22 & 0.246758 & $5.43233\times10^{-5}$ \\
		& 23 & 0.208803 & $1.45212\times10^{-5}$ \\
		& 24 & 0.172276 & $3.72326\times10^{-6}$ \\
		& 25 & 0.137071 & $9.17233\times10^{-7}$ \\
		& 26 & 0.103095 & $2.17439\times10^{-7}$ \\
		& 27 & 0.070265 & $4.96720\times10^{-8}$ \\
		& 28 & 0.038504 & $1.09492\times10^{-8}$ \\
		& 29 & 0.007745 & $2.33172\times10^{-9}$ \\
		\bottomrule
	\end{tabular}
\end{table}
\newpage
\begin{table}[H]
	\centering
	\ContinuedFloat
	\caption[]{(continued). The columns have the same meaning as in the preceding part.}
	\small
	\begin{tabular}{cccc}
		\toprule
		Degree $d$ & Rank $n$ & $h_{\mathfrak{n}}(n,d)$ & Upper bound for $\nr(\mathfrak{v}(L))$ \\
		\midrule
		\multirow{12}{*}{2}
		& 1 & 3.452611 & $7.89569$ \\
		& 2 & 2.759463 & $15.5855$ \\
		& 3 & 2.258104 & $13.6731$ \\
		& 4 & 1.863584 & $6.74740$ \\
		& 5 & 1.537614 & $2.13102$ \\
		& 6 & 1.259527 & $0.467384$ \\
		& 7 & 1.016850 & $0.0753125$ \\
		& 8 & 0.801460 & $0.00929131$ \\
		& 9 & 0.607760 & $9.05695\times10^{-4}$ \\
		& 10 & 0.431728 & $7.15108\times10^{-5}$ \\
		& 11 & 0.270373 & $4.66634\times10^{-6}$ \\
		& 12 & 0.121408 & $2.55861\times10^{-7}$ \\
		\midrule
		\multirow{7}{*}{3}
		& 1 & 4.348282 & $9.66819$ \\
		& 2 & 3.125888 & $8.10835$ \\
		& 3 & 2.279105 & $1.82029$ \\
		& 4 & 1.628022 & $0.164364$ \\
		& 5 & 1.097941 & $0.00739092$ \\
		& 6 & 0.650372 & $1.88877\times10^{-4}$ \\
		& 7 & 0.262790 & $3.00103\times10^{-6}$ \\
		\midrule
		\multirow{5}{*}{4}
		& 1 & 4.924220 & $8.59888$ \\
		& 2 & 3.132460 & $2.05391$ \\
		& 3 & 1.924347 & $0.0784946$ \\
		& 4 & 1.008213 & $8.60924\times10^{-4}$ \\
		& 5 & 0.268749 & $3.65579\times10^{-6}$ \\
		\midrule
		\multirow{4}{*}{5}
		& 1 & 5.243047 & $5.91394$ \\
		& 2 & 2.857502 & $0.296275$ \\
		& 3 & 1.278816 & $0.00141481$ \\
		& 4 & 0.092788 & $1.38226\times10^{-6}$ \\
		\midrule
		\multirow{3}{*}{6}
		& 1 & 5.582346 & $4.15148$ \\
		& 2 & 2.586614 & $0.0430868$ \\
		& 3 & 0.631313 & $2.53506\times10^{-5}$ \\
		\midrule
		7 & 2 & 1.143409 & $6.00804\times10^{-4}$ \\
		\midrule
		8 & 2 & 1.275464 & $1.95603\times10^{-4}$ \\
		\midrule
		9 & 2 & 0.520141 & $1.07957\times10^{-5}$ \\
		\bottomrule
	\end{tabular}
\end{table}
\begin{table}[H]
	\centering
	\caption{\textbf{Integral case.} Upper bounds $N_{d}$ for the rank and $D_{n}$ for the degree of trace Euclidean $\mathcal{O}_{F}$-lattices, covering every positive degree and rank.}
	\label{tab:max-rank-degree-norm}
	\begin{tabular}{cc|cc}
		\toprule
		Degree $d$ & Rank bound $N_{d}$ & Rank $n$ & Degree bound $D_{n}$ \\
		\midrule
		$1$ & $29$ & $1$ & $6$ \\
		$2$ & $12$ & $2$ & $9$ \\
		$3$ & $7$ & $3$ & $6$ \\
		$4$ & $5$ & $4$ & $5$ \\
		$5$ & $4$ & $5$ & $4$ \\
		$6$ & $3$ & $6\le n\le7$ & $3$ \\
		$7\le d\le9$ & $2$ & $8\le n\le12$ & $2$ \\
		$d\ge10$ & $0$ & $13\le n\le29$ & $1$ \\
		&  & $n\ge30$ & $0$ \\
		\bottomrule
	\end{tabular}
\end{table}
\section*{Acknowledgments}
This work was supported by the National Natural Science Foundation of China (Project No. 12301013 and No. 12671016).

\section*{AI Declaration}
ChatGPT (GPT-5.6 Sol) was used to assist with proofreading the manuscript and improving its exposition, with suggestions incorporated into the revised version. It was also employed to write Mathematica code for verifying the numerical calculations in Section \ref{sec:trace-euclidean-fields} and for computing the entries in Tables \ref{tab:admissible-classical}, \ref{tab:max-rank-degree-classical}, \ref{tab:admissible-norm}, and \ref{tab:max-rank-degree-norm}, and to develop the Lean 4 formalization for potential future use. The new notions introduced, the motivation presented, and the mathematical connections established in this paper originated with the author. All the mathematical arguments, Mathematica code, and calculations involving AI assistance were independently verified by the author, who takes full responsibility for the content of this paper.

The Lean 4 formalization, which covers selected main results, compiled without errors and passed the automated checks. The verification and formalization materials, including documentation of the scope and external assumptions of the Lean formalization, are available in the accompanying repository \cite{he_trace-euclidean-verification_2026}.

\end{document}